\documentclass{amsart}

\usepackage{amsmath, amssymb}
\newtheorem{theorem}{Theorem}[section]
\newtheorem{lemma}[theorem]{Lemma}
\newtheorem{proposition}[theorem]{Proposition}
\newtheorem{corollary}[theorem]{Corollary}

\theoremstyle{definition}

\newtheorem*{proofoftheorem1}{Proof of Theorem \ref{th1}}
\newtheorem*{proofoftheorem2}{Proof of Theorem \ref{th2}}

\newtheorem*{proofofcorollary2}{Proof of Corollary \ref{cor2}}

\theoremstyle{remark}
\newtheorem{remark}[theorem]{Remark}

\numberwithin{equation}{section}

\newcommand{\Aut}{{\mathrm {Aut}}}

\newcommand{\Irr}{{\mathrm {Irr}}}

\newcommand{\la}{\langle}
\newcommand{\ra}{\rangle}
\newcommand{\MI}{{\mathrm {MI}}}
\newcommand{\mi}{{\mathrm {m.i}}}
\newcommand{\f}{{\mathbb F}}
\newcommand{\St}{{\mathrm{St}}}

\begin{document}
\title{On a generalization of $M$-group}

\author[T. Le]{Tung Le${}^\dag$}

\address{T.L.: School of Mathematical Sciences,
North-West University, Mafikeng Campus, Mmabatho 2735, South Africa}
\address{T.L. (Alternative International Address): Vietnam National University, Ho Chi Minh City, Vietnam}
\email{lttung96@yahoo.com}

\author[J. Moori]{Jamshid Moori$^{\ddag}$}
\address{J.M.: School of Mathematical Sciences,
North-West University, Mafikeng Campus, Mmabatho 2735, South Africa}
\email{jamshid.moori@nwu.ac.za}

\author[H.P. Tong-Viet]{Hung P. Tong-Viet${}^\flat$}
\address{H.P.T-V. : School of Mathematical Sciences,
North-West University, Mafikeng Campus, Mmabatho 2735, South Africa} \email{tvphihung@gmail.com}

\subjclass[1991]{Primary 20C15, Secondary 20D10; 20D05; 20C33}

\keywords{M-groups; multiply imprimitive characters; solvable groups}

\thanks{${}^\dag$ Supported by North-West University (Mafikeng) and the NAFOSTED (Vietnam).\\
${}^\ddag$ Supported by North-West University (Mafikeng) and  a  Competitive Grant from NRF.\\
${}^\flat$ Supported by NRF and North-West University (Mafikeng)}
\date{\today}

\begin{abstract}
{In this paper, we show that if for every nonlinear complex irreducible character $\chi$ of a
finite group $G,$ some multiple of $\chi$ is induced from an irreducible character of some proper
subgroup of $G,$ then $G$ is solvable. This is a generalization of Taketa's theorem on the
solvability of $M$-group.}
\end{abstract}

\maketitle


\section{Introduction and Notation}

All groups in this paper are finite and all characters are complex characters. For a group $G,$ let
$\Irr(G)$ denote the set of all irreducible characters of $G.$ An irreducible character $\chi$ of a
group $G$ is {\em monomial} if it is induced from a linear character of a subgroup of $G,$ that is
$\chi=\lambda^G,$ where $\lambda\in\Irr(U)$ with $\lambda(1)=1$ and $U\leq G.$ A group $G$ is
called an {\em $M$-group} if every irreducible character of $G$ is monomial. A well known theorem
of Taketa  says that all $M$-groups are solvable. (see \cite[Theorem~5.12]{Isaacs}.) There have
been many generalizations of Taketa's theorem in the literature. Observe that if $\chi\in\Irr(G)$
is a monomial character induced from the subgroup $U\leq G$ and the linear character $\lambda\in
\Irr(U),$ then $U/Ker(\lambda)$ is cyclic, in particular $U/Ker(\lambda)$ is solvable. With this
observation, Dornhoff showed in \cite{Dornhoff} that a group $G$ is solvable provided that every
irreducible character of $G$ is induced from an irreducible character of a solvable section of $G.$
More generally,  Isaacs  proved in \cite{Isaacs84} that if every irreducible character
$\chi\in\Irr(G)$ is induced from an irreducible character $\lambda$ of a subgroup $H$ such that
$H/Ker(\lambda)\in \mathfrak{F},$ then $G$ is in $\mathfrak{F},$ where $\mathfrak{F}$ is a class of
groups closed under isomorphisms, subgroups and extensions. If we choose $\mathfrak{F}$ to be the
class of solvable groups, then we obtain the result of Dornhoff mentioned above. A group $G$ is
called a  {\em Quasi-Solvable Induced} (QSI) group if every irreducible character $\chi$ of $G$ has
some multiple which is induced from a character $\lambda$ of a subgroup U with $U/Ker\lambda$
solvable. Recently, K\"{o}nig \cite{Konig} showed that every QSI group is solvable. Obviously, this
is a generalization of both Dornhoff's and Taketa's theorems. The main purpose of this paper is to
remove the solvability assumption on the quotient  $U/Ker(\lambda).$

Recall that an irreducible character of a group is {\em imprimitive} if it is induced from an
irreducible character of some proper subgroup and it is {\em primitive} if it is not induced by any
character of any proper subgroups. For convenience reason, we make the following definitions. A
nonlinear character $\chi\in\Irr(G)$ is called a \emph{multiply imprimitive character} (or {\em
$\mi$ character} for short) induced from the pair $(U,\lambda)$ if there exist a proper subgroup
$U$ of $G$ and an irreducible character $\lambda\in\Irr(U)$ such that $\lambda^G=m\chi$ for some
nonnegative integer $m.$ Moreover, a group $G$ is said to be an {\em $\MI$-group} if every
nonlinear irreducible character of $G$ is an $\mi$ character.

Let $N\unlhd G.$ We write $\Irr(G|N)=\Irr(G)-\Irr(G/N).$ If $N$ is a normal subgroup of $G$ and
every nonlinear irreducible character in $\Irr(G|N)$ is an $\mi$ character, then $G$ is called an
\emph{$\MI$-group relative to} $N.$ We now state our main result.

\begin{theorem}\label{th1} Let $N$ be a normal subgroup of a group
$G.$ If $G$ is an $\MI$-group relative to $N,$ then $N$ is solvable.
\end{theorem}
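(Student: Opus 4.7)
My plan is to prove Theorem~\ref{th1} by induction on $|G|$, taking a minimal counterexample $(G,N)$ so that $G$ is an $\MI$-group relative to $N$ while $N$ is non-solvable. The first step is a reduction lemma: if $L \trianglelefteq G$ with $L \le N$, then $G/L$ is an $\MI$-group relative to $N/L$. Given nonlinear $\bar\chi \in \Irr(G/L \mid N/L)$, inflate to $\chi \in \Irr(G \mid N)$ with $L \le \Ker(\chi)$; the hypothesis gives $\lambda^G = m\chi$ for some proper $U < G$ and $\lambda \in \Irr(U)$. Since $\Ker(\lambda^G) = \Ker(\chi)$ is the largest normal subgroup of $G$ contained in $\Ker(\lambda) \le U$, normality of $L$ forces $L \le \Ker(\lambda)$, so $\lambda$ descends to $\bar\lambda \in \Irr(U/L)$ with $\bar\lambda^{G/L} = m\bar\chi$.

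Next, let $M$ be a minimal normal subgroup of $G$ contained in $N$. If $M$ were abelian, the reduction lemma combined with induction would give $N/M$ solvable, whence $N$ solvable, a contradiction. So $M$ must be non-abelian: $M = T_1 \times \cdots \times T_k$ with the $T_i$ isomorphic non-abelian simple groups, transitively permuted by $G$.

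To derive a contradiction, the plan is to exhibit a nonlinear $\chi \in \Irr(G \mid M) \subseteq \Irr(G \mid N)$ that is not m.i. Using the CFSG-dependent fact that every non-abelian simple group admits an irreducible character extending to its full automorphism group, pick $\theta \in \Irr(T)$ that extends to $\Aut(T)$ and is \emph{not} m.i.\ in $T$; then $\Theta := \theta^{\otimes k}$ is $\Aut(M)$-invariant, hence $G$-invariant, and (after addressing any cocycle obstruction) extends to $\tilde\Theta \in \Irr(G)$. Suppose for contradiction that $\tilde\Theta$ is m.i., say $\lambda^G = m\tilde\Theta$ for some proper $U < G$ and $\lambda \in \Irr(U)$. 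Applying Mackey to $\lambda^G|_M = m\Theta$, each summand is a positive multiple of the irreducible $\Theta$; in particular $(\lambda|_{U\cap M})^M = n\Theta$ for some $n \ge 1$. The non-m.i.\ property of $\theta$ (together with the description of $\Irr(T^k)$) rules out $U \cap M \lneq M$, so $M \le U$. Then by Gallagher $\lambda = (\tilde\Theta|_U)\beta$ for some $\beta \in \Irr(U/M)$, and the projection-formula identity $\lambda^G = \tilde\Theta \cdot \beta^G = m\tilde\Theta$, combined with another use of Gallagher applied to $\Irr(G \mid \Theta)$, forces $\beta^G = m \cdot 1_G$; thus $\beta = 1_U$ and $[G:U] = m = 1$, contradicting $U < G$.

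The principal obstacle is securing the character $\theta$: one must establish, via CFSG and detailed character-table knowledge, that every non-abelian simple $T$ possesses an irreducible character that is simultaneously extendable to $\Aut(T)$ and not an m.i.\ character of $T$. The case $k > 1$ moreover demands a careful analysis of subdirect subgroups of $M = T^k$ and their induced characters, in order to exclude $U \cap M \lneq M$ from the Mackey argument above.
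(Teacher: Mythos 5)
Your reduction is sound and runs essentially parallel to the paper's: your inflation lemma is the paper's Lemma \ref{lem1}(i), and the passage to a minimal normal subgroup followed by the Mackey/Gallagher analysis of an extension of $\theta^{\otimes k}$ reproduces Lemmas \ref{lem3} and \ref{lem4}. Your Gallagher-plus-projection-formula treatment of the case $M\leq U$ is a clean alternative to the paper's degree count, and the exclusion of $U\cap M\lneq M$ that you defer is handled in the paper not by analyzing general subdirect subgroups of $T^k$ but by first replacing the inducing subgroup with a maximal subgroup of $M$ via transitivity of induction, choosing a simple factor $R_1\nleq U$ so that $M=R_1U$, and applying Mackey to the restriction to $R_1$, which lands in the proper subgroup $U\cap R_1$; you should adopt that device rather than leave the point open.

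The genuine gap is the one you name yourself: the existence, for every nonabelian simple group $T$, of a nonlinear $\theta\in\Irr(T)$ that extends to $\Aut(T)$ \emph{and} is not an $\mi$ character. This is not a quotable fact; it is Theorem \ref{th2}, the main technical content of the paper, and it occupies Sections \ref{Lietypegroups}--\ref{Sporadicgroups}. What the literature supplies is only that some nontrivial irreducible character extends to $\Aut(T)$ (for instance the Steinberg character for groups of Lie type). Showing that such a character can be chosen to fail to be $\mi$ requires, in the paper, the observation that $m\St_S=\lambda^S$ forces every semisimple class of $S$ to meet the inducing maximal subgroup $H$, hence the classifications of Malle--Saxl--Weigel and Liebeck--Praeger--Saxl of such subgroups, a Zsigmondy-prime and maximal-torus analysis to eliminate the surviving orthogonal and symplectic configurations, and separate ad hoc arguments (permutation characters for $\mathrm{A}_n$, Atlas checks for the sporadic groups and for the few Lie-type exceptions such as $\mathrm{L}_2(7)$ and $\mathrm{S}_4(3)$ where the Steinberg character \emph{is} $\mi$ and a different character must be produced). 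Without this input your argument is a correct reduction of Theorem \ref{th1} to Theorem \ref{th2}, not a proof of Theorem \ref{th1}.
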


If we take $N=G',$ then the set $\Irr(G|N)$ is exactly the set of all nonlinear irreducible
characters of $G.$ Now assume that $G$ is an $\MI$-group. Then $G$ is an $\MI$-group relative to
$G'$ and thus by applying Theorem \ref{th1}, we deduce that $G'$ is solvable and so $G$ is
solvable. Therefore, we have proved the following corollary.
\begin{corollary}\label{cor1} Every $\MI$-group is solvable.
\end{corollary}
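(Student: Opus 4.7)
The plan is to reduce the corollary to Theorem \ref{th1} via the canonical choice $N=G'$. The first step is to identify $\Irr(G|G')$ with the set of nonlinear irreducible characters of $G$. Indeed, the characters of $G$ that are trivial on $G'$ are precisely those factoring through the abelian quotient $G/G'$, and every irreducible character of an abelian group is linear; conversely every linear character $\lambda\in\Irr(G)$ has $G'\leq\Ker(\lambda)$. Hence $\Irr(G/G')$ coincides with the set of linear characters of $G$, and so $\Irr(G|G')=\Irr(G)-\Irr(G/G')$ is exactly the set of nonlinear irreducible characters of $G$.

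Assuming $G$ is an $\MI$-group, by definition every nonlinear $\chi\in\Irr(G)$ is an $\mi$ character. In particular every character in $\Irr(G|G')$ is an $\mi$ character, so $G$ is an $\MI$-group relative to $N=G'$. Theorem \ref{th1} then applies and yields that $G'$ is solvable.

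Finally, since $G/G'$ is abelian and $G'$ is solvable, $G$ is an extension of a solvable group by a solvable group, hence solvable. This completes the argument.

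There is no real obstacle here: the entire content of the corollary sits inside Theorem \ref{th1}, and the only thing to verify carefully is the identification of $\Irr(G|G')$ with the nonlinear irreducible characters, which is immediate from the definition $\Irr(G|N)=\Irr(G)-\Irr(G/N)$ given in the paper.
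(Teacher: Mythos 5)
Your proposal is correct and follows exactly the paper's own route: take $N=G'$, note that $\Irr(G|G')$ is precisely the set of nonlinear irreducible characters so that an $\MI$-group is an $\MI$-group relative to $G'$, apply Theorem \ref{th1} to get $G'$ solvable, and conclude solvability of $G$ from the abelian quotient $G/G'$. Your spelled-out justification of the identification of $\Irr(G|G')$ with the nonlinear characters is a correct elaboration of a step the paper states without proof.
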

This gives a positive answer to \cite[Problem~162]{Berk98}.
We also obtain an answer to \cite[Problem~123]{Berk98} as follows.

\begin{corollary}\label{cor2} Let $H\leq G$ be a proper subgroup of a group
$G.$ Suppose that for any $\lambda\in\Irr(H)$ with $\lambda\neq 1_H,$ we have $\lambda^G=m\chi$ for
some $\chi\in\Irr(G)$ and some integer $m\geq 1.$ Then the normal closure of $H$ in $G$ is
solvable. In particular, $H$ is solvable.
\end{corollary}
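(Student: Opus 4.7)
The plan is to derive Corollary \ref{cor2} from Theorem \ref{th1} by verifying that $G$ is an $\MI$-group relative to $N := H^G$, the normal closure of $H$ in $G$. Once this is done, Theorem \ref{th1} yields the solvability of $N$, and since $H \leq N$, the subgroup $H$ is itself solvable.

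To check the $\MI$-relative property, I would fix a nonlinear $\chi \in \Irr(G|N)$, which by definition means $N \not\leq \Ker(\chi)$. Because $N$ is generated by the $G$-conjugates $H^g$ and $\Ker(\chi)$ is normal in $G$, this forces $H \not\leq \Ker(\chi)$; equivalently, the restriction $\chi_H$ is not a multiple of $1_H$. Hence $\chi_H$ has some irreducible constituent $\lambda \in \Irr(H)$ with $\lambda \neq 1_H$.

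Now the hypothesis of the corollary applies to $\lambda$: there exist $\psi \in \Irr(G)$ and an integer $m \geq 1$ with $\lambda^G = m\psi$. By Frobenius reciprocity,
$$0 < \langle \lambda, \chi_H \rangle = \langle \lambda^G, \chi \rangle = m\langle \psi, \chi \rangle,$$
so $\psi = \chi$ and $\lambda^G = m\chi$. Since $H$ is a proper subgroup of $G$, this exhibits $\chi$ as an $\mi$ character induced from the pair $(H,\lambda)$. Therefore $G$ is an $\MI$-group relative to $N$, Theorem \ref{th1} gives the solvability of $N = H^G$, and $H$ is solvable as claimed.

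The argument amounts to a single application of Frobenius reciprocity; the only delicate point is recognizing that $\chi \in \Irr(G|N)$ together with $N = H^G$ forces $\chi_H$ to contain a non-trivial irreducible character of $H$, which is precisely what makes the hypothesis applicable. No serious obstacle is anticipated once Theorem \ref{th1} is in hand.
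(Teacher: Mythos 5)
Your proof is correct, and its overall strategy coincides with the paper's: reduce to Theorem \ref{th1} by showing that $G$ is an $\MI$-group relative to the normal closure $N=\la H^G\ra$. Where you differ is in how that relative $\MI$ property is verified. The paper splits into two cases. When $N=G$ it argues exactly as you do (via Corollary \ref{cor1}); when $N\lneq G$ it first restricts $\chi$ to $N$, picks a nontrivial constituent $\theta\in\Irr(N)$, and distinguishes whether $\theta$ is $G$-invariant, invoking Clifford theory to induce from the inertia group in the non-invariant case before finally descending to $H$. You bypass all of this with the single observation that $N\not\leq \Ker(\chi)$ together with the normality of $\Ker(\chi)$ forces $H\not\leq \Ker(\chi)$, so $\chi_H$ already has a nontrivial constituent $\lambda$, and Frobenius reciprocity pins down $\lambda^G=m\chi$. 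This is a genuine streamlining: it eliminates both the case distinction on $N=G$ versus $N\lneq G$ and the Clifford-theoretic detour through $\Irr(N)$, at no loss of generality, since the hypothesis of the corollary applies directly to constituents of $\chi_H$ rather than of $\theta_H$. The one point worth keeping explicit, which you do address, is that only \emph{nonlinear} $\chi\in\Irr(G|N)$ need to be shown to be $\mi$ characters, matching the definition of an $\MI$-group relative to $N$.
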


For the proof of Theorem \ref{th1}, in Section \ref{reduction} we present some results needed for
reducing the problem to a question concerning the existence of a special $\mi$ character in
nonabelian simple groups. Using the classification of nonabelian simple groups, we obtain the
answer to this question which is stated as Theorem \ref{th2} below. This theorem will be verified
in Sections \ref{Lietypegroups}, \ref{Alternatinggroups} and \ref{Sporadicgroups}. Finally, the
proofs of Theorem \ref{th1} and Corollary \ref{cor2} will be carried out in the last section.

\begin{theorem}\label{th2}
If $S$ is a nonabelian simple group, then $S$ has a nonlinear irreducible character which is
extendible to $\Aut(S)$ but it is not an $\mi$ character.
\end{theorem}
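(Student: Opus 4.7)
The plan is to prove Theorem~\ref{th2} case-by-case via the classification of finite simple groups. First, reduce to maximal subgroups: if $\chi \in \Irr(S)$ is $\mi$ via $\lambda^S = m\chi$ with $\lambda \in \Irr(U)$ and $U$ a proper subgroup, choose a maximal $M$ with $U \leq M < S$. Then $\lambda^S = (\lambda^M)^S$, and writing $\lambda^M = \sum_\mu a_\mu \mu$ as a sum of irreducibles of $M$ gives $m\chi = \sum_\mu a_\mu \mu^S$. Since all coefficients and all constituents of each $\mu^S$ are nonnegative, every $\mu^S$ with $a_\mu > 0$ must itself be a positive integer multiple of $\chi$. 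Hence $\chi$ is $\mi$ if and only if some maximal $M<S$ has $\mu \in \Irr(M)$ with $\mu^S = c\chi$; equivalently, by Frobenius reciprocity, $\chi$ is not $\mi$ iff for every maximal $M$ and every irreducible $\mu$ appearing in $\chi|_M$, some $\chi' \in \Irr(S) \setminus \{\chi\}$ also has $\mu$ as a constituent of $\chi'|_M$.

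For simple groups of Lie type in defining characteristic $p$, the natural candidate is $\chi = \St$, the Steinberg character, of degree $|S|_p$; it is a classical fact that $\St$ extends to $\Aut(S)$. To show $\St$ is not $\mi$, combine three ingredients: (i) \emph{divisibility}: any relation $\mu^S = c\St$ forces $|S:M|\mu(1) = c|S|_p$, severely constraining the $p$-structure of $M$; (ii) \emph{parabolic restriction}: when $M$ is a maximal parabolic, use the Curtis-type decomposition of $\St|_M$ in terms of the Steinberg character of the Levi factor to exhibit, for each irreducible constituent of $\St|_M$, another unipotent character of $S$ whose restriction to $M$ shares this constituent; (iii) the non-parabolic maximal subgroups surviving (i) form a short list that can be examined individually.

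For alternating groups $A_n$ with $n \geq 5$ and $n \neq 6$, take the character indexed by $(n-1,1)$, of degree $n-1$, which extends to $S_n = \Aut(A_n)$. Using the classical branching rules and the description of maximal subgroups of $A_n$ (intransitive, imprimitive, and primitive), verify case by case that every irreducible of $M$ occurring in $\chi|_M$ also occurs in the restriction of some other character of $A_n$---often a low-degree, readily extendible one. The exceptional isomorphism $A_6 \cong \PSL_2(9)$ means $A_6$ is absorbed into the Lie-type case by using the Steinberg character of degree $9$. For sporadic simple groups, carry out an ATLAS-based case analysis: for each $S$, select a nonlinear $\Out(S)$-invariant $\chi \in \Irr(S)$ (necessarily extendible to $\Aut(S)$ once the Schur-multiplier obstruction is checked to vanish), and verify from the character table and the list of maximal subgroups that no $\mu \in \Irr(M)$ satisfies $\mu^S = c\chi$.

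The hard part is the Lie-type case: achieving a uniform argument across all families, ranks, and field sizes requires a careful case split on the class of maximal subgroup (in the sense of the Aschbacher or Liebeck--Seitz classifications), together with ad-hoc handling of small-rank or small-field exceptions where the generic structural arguments may break down. The alternating and sporadic cases, while conceptually cleaner, still require systematic verification and a handful of small-case exceptions.
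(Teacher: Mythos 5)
Your skeleton coincides with the paper's (reduce to maximal subgroups, then CFSG with the Steinberg character for Lie type, the degree $n-1$ character for $\textrm{A}_n$, and an ATLAS check for sporadics), and your reduction to maximal subgroups and the Frobenius-reciprocity reformulation are correct. But the Lie-type plan has a genuine gap. Your constraint (i) only says $p\nmid c$ and $\mu(1)_p=|M|_p$, i.e.\ $\mu$ has $p$-defect zero in $M$; this already forces $O_p(M)=1$, so it excludes parabolics and makes your step (ii) moot, but it does \emph{not} leave ``a short list'' of non-parabolic maximal subgroups --- subfield subgroups, maximal-rank reductive subgroups, and most members of the Aschbacher classes and of class $\mathcal{S}$ all survive it. The ingredient that actually drives the paper's proof, and which you never invoke, is the character-value/class-fusion condition: $\St_S(g)=\pm|C_S(g)|_p\neq 0$ for every semisimple $g$, so if $c\St_S=\mu^S$ then \emph{every} semisimple class of $S$ meets $M$, whence $\pi_{p'}(S)\subseteq\pi(M)$. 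It is this hypothesis that lets one quote the Malle--Saxl--Weigel and Liebeck--Praeger--Saxl classifications and genuinely reduce to a short list; and even then one residual family (the stabilizer of a minus point in $\Omega_{2n+1}(q)$, $n$ even, $q$ odd) requires a bespoke argument with Coxeter tori, regular semisimple elements and the fusion $g^S\cap H=g^H$ to force $m=1$ against $m\geq|S:H|_{p'}$. Nothing in your outline supplies a substitute for this.

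A second, concrete error: the premise ``show $\St$ is not $\mi$'' fails for $\textrm{L}_2(5)$, $\textrm{L}_2(7)\cong\textrm{L}_3(2)$ and $\textrm{S}_4(3)\cong\textrm{U}_4(2)$, where the Steinberg character actually \emph{is} $\mi$ (indeed monomial from $\textrm{A}_4$, $\textrm{S}_4$, $7{:}3$, and induced from $2^4{:}\textrm{A}_5$, respectively); your program would terminate there without a conclusion, and you provide no fallback character (the paper switches characteristic via the exceptional isomorphisms, and for $\textrm{L}_2(7)$ uses the degree-$6$ character). Finally, for $\textrm{A}_n$ your branching-rule sweep over all maximal subgroups, including primitive ones, is much weaker than what is needed as stated; the paper instead restricts $\lambda^{\textrm{A}_n}$ to $\textrm{A}_{n-2}$ via Mackey, uses double transitivity and positivity to force $\textrm{A}_{n-2}\leq U$, hence $U\cong\textrm{A}_{n-1}$ or $\textrm{S}_{n-2}$, and finishes with the divisibility $m(n-1)=n\lambda(1)$ together with $n\geq m^2$. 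Your sporadic plan matches the paper's.
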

\noindent {\bf Notation.} If $G$ is a group, then we write $\pi(G)$ to denote the set of all prime
divisors of the order of $G.$ For a normal subgroup $N$ of $G,$ if $\theta\in\Irr(N),$ then the set
of all irreducible constituents of $\theta^G$ is denoted by $\Irr(G|\theta).$ If $n$ is a positive
integer and $p$ is a prime then $n_p$ and $n_{p'}$ are the largest $p$-part and $p'$-part of $n,$
respectively. The greatest common divisor of two integers $a$ and $b$ is denoted by $\gcd(a,b).$ We
follow \cite{atlas} for notation of simple groups. Other notation is standard.


\section{Reduction to simple groups}\label{reduction}
The following lemma is a modification of  Lemma~2.1 in \cite{Konig}.
\begin{lemma}\label{lem1} Let $K$ and $N$ be normal subgroups of a group $G.$ Suppose that $G$ is an \textrm{MI}-group relative
to $N.$ Then the following hold.

\begin{itemize}
\item[(i)]  $G/K$ is an \textrm{MI}-group relative to $NK/K;$

\item[(ii)] $G$ is an \textrm{MI}-group relative to $K$ provided that
$K\leq N.$
\end{itemize}
\end{lemma}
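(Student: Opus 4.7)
The plan is to dispose of (ii) by a short containment of character sets and to handle (i) by descending a given $\mi$ structure on $G$ to one on $G/K$ via inflation and Frobenius reciprocity.

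For (ii), I would note that if $K\leq N$ then $\Irr(G|K)\subseteq\Irr(G|N)$: a character $\chi\in\Irr(G|K)$ satisfies $K\not\leq\Ker(\chi)$, and since $K\leq N$ this forces $N\not\leq\Ker(\chi)$. Hence every nonlinear $\chi\in\Irr(G|K)$ is automatically a nonlinear character in $\Irr(G|N)$, so is an $\mi$ character by the hypothesis that $G$ is an \textrm{MI}-group relative to $N$.

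For (i), let $\bar\chi\in\Irr(G/K|NK/K)$ be nonlinear and let $\chi\in\Irr(G)$ denote its inflation, so $K\leq\Ker(\chi)$. Since $\bar\chi\notin\Irr(G/NK)$, we have $N\not\leq\Ker(\chi)$, so $\chi\in\Irr(G|N)$ and is still nonlinear. By hypothesis there exist a proper subgroup $U<G$ and $\lambda\in\Irr(U)$ with $\lambda^G=m\chi$. By Frobenius reciprocity $\lambda$ is a constituent of $\chi|_U$, and since $K\leq\Ker(\chi)$ the restriction $\chi|_U$ is trivial on $U\cap K$; hence so is every one of its irreducible constituents, in particular $\lambda$. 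Thus $\lambda$ descends to an irreducible character $\bar\lambda$ of $U/(U\cap K)\cong UK/K$. A direct Frobenius-reciprocity computation, using that for a normal subgroup $M\unlhd H$ the inner product of two characters trivial on $M$ is preserved when passing to $H/M$, then yields $\bar\lambda^{G/K}=m\bar\chi$.

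The one genuine obstacle is to verify that $UK/K$ is a \emph{proper} subgroup of $G/K$, as the $\mi$ definition demands. I would resolve this by comparing degrees: $\bar\lambda^{G/K}=m\bar\chi$ gives $[G:UK]\lambda(1)=m\chi(1)$, while $\lambda^G=m\chi$ gives $[G:U]\lambda(1)=m\chi(1)$. If $UK=G$ then the first identity forces $\lambda(1)=m\chi(1)$, and together with the second this yields $[G:U]=1$, contradicting $U<G$. Therefore $UK<G$, so $\bar\chi$ is an $\mi$ character of $G/K$ induced from the pair $(UK/K,\bar\lambda)$, completing (i).
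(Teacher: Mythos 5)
Your proof is correct, and part (ii) coincides with the paper's one-line argument, but in part (i) you take a somewhat different and slightly longer route than the paper. The paper observes that $K\leq \Ker(\chi)=\Ker(\lambda^G)=\bigcap_{g\in G}(\Ker\lambda)^g\leq \Ker(\lambda)\leq U$, so that $K\leq U$ from the outset; then $\lambda$ descends to $U/K$, properness of $U/K$ in $G/K$ is immediate from $U\lneq G$, and the identity $\hat\lambda^{G/K}=m\hat\chi$ is verified by directly comparing the induced-character value formulas on $G$ and on $G/K$. You instead establish only the weaker containment $U\cap K\leq\Ker(\lambda)$ (via restriction of $\chi$ to $U\cap K$), pass to $UK/K\cong U/(U\cap K)$, obtain the induction identity by a Frobenius-reciprocity/inflation argument, and then need the extra degree computation to rule out $UK=G$. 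All of this is sound --- in particular your inner-product computation does yield $\bar\lambda^{G/K}=m\bar\chi$, and comparing degrees in that identity with $\lambda^G=m\chi$ in fact forces $[G:UK]=[G:U]$, i.e.\ $K\leq U$ after all --- but the paper's use of $\Ker(\lambda^G)=\bigcap_g(\Ker\lambda)^g$ delivers $K\leq U$ up front and makes both the descent and the properness of the inducing subgroup automatic, which is worth internalizing as the cleaner mechanism here.
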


\begin{proof}
Assume that $\hat{\chi}\in\Irr(G/K|NK/K).$ Then $\hat{\chi}$ can be considered
as a character $\chi$ of $G$ with $K\leq Ker(\chi)=Ker(\hat{\chi}).$ As
$NK/K\nsubseteq Ker(\hat{\chi})$ but $K\leq Ker(\hat{\chi}),$ we
deduce that $N\not\leq Ker(\chi)$ so $\chi\in\Irr(G|N)$ with
$K\leq Ker(\chi).$ Since $G$ is an \textrm{MI}-group relative to $N,$
we deduce that $m\chi=\lambda^G,$ where $U\lneq G,$
$\lambda\in\Irr(U)$ and $m\geq 1.$ We have $$K\leq
Ker(\lambda^G)=\bigcap_{g\in G}(Ker(\lambda))^g$$ and hence $K\leq
Ker(\lambda)\unlhd U.$ Thus  $\lambda$ can be considered as a
character $\hat{\lambda}$ of $U/K.$ For $x\in G,$ we have

\begin{eqnarray*}
\hat{\lambda}^{G/K}(xK)&=&\frac{1}{|U/K|}\sum_{\begin{tabular}{cc}
$yK\in G/K$\\$(xK)^{yK}\in U/K$
\end{tabular}
}\hat{\lambda}((xK)^{yK})
 \\
 &=&\frac{1}{|U|}\sum_{\begin{tabular}{cc}
$y\in G$\\ $x^{y}\in U$
\end{tabular}}\lambda(x^y)= \lambda^G(x)=m\chi(x)=m\hat{\chi}(xK).
 \end{eqnarray*}
Therefore $\hat{\chi}\in\Irr(G/K|NK/K)$ is an $\mi$
character induced from $(U/K,\hat{\lambda}).$ This proves $(i).$ If
$K\leq N\leq G,$  then $(ii)$ is obvious since $\Irr(G|K)\subseteq
\Irr(G|N).$
\end{proof}

\begin{lemma}\label{lem2} Let $\chi\in\Irr(G)$ be an $\mi$
character induced from a subgroup $U\lneq G$ and $\lambda\in\Irr(U)$
with $\lambda^G=m\chi.$ Then

\begin{itemize}
\item[(i)] If $\chi(g)\neq 0$ for some $g\in G,$ then $g^G\cap U\neq
\emptyset.$

\item[(ii)] We have $|G:U|\lambda(1)= m\chi(1),$ $\chi(1)\geq
m\lambda(1)$ and $|G:U|\geq m^2.$
\end{itemize}
\end{lemma}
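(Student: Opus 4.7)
The plan is to derive both parts directly from the standard induced-character formula together with Frobenius reciprocity; the proof will be a short three-step verification and I do not expect any real obstacle.

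For part (i), I would start from the induced-character formula
$$\lambda^G(g)=\frac{1}{|U|}\sum_{\substack{y\in G\\ y^{-1}gy\in U}}\lambda(y^{-1}gy),$$
which is essentially the formula already displayed inside the proof of Lemma \ref{lem1}. If $g^G\cap U=\emptyset$, then no $G$-conjugate of $g$ lies in $U$, so the summation set is empty and $\lambda^G(g)=0$. Combined with $\lambda^G=m\chi$, this forces $m\chi(g)=0$ and hence $\chi(g)=0$. The contrapositive is exactly statement (i).

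For part (ii), I would first evaluate the identity $\lambda^G=m\chi$ at $g=1$ to obtain
$$|G:U|\lambda(1)=\lambda^G(1)=m\chi(1),$$
which is the first asserted equality. To prove the inequality $\chi(1)\geq m\lambda(1)$, I would invoke Frobenius reciprocity: since the Hermitian inner product on class functions is $\ZZ$-bilinear,
$$\langle\chi_U,\lambda\rangle_U=\langle\chi,\lambda^G\rangle_G=\langle\chi,m\chi\rangle_G=m,$$
so $\lambda$ occurs in the restriction $\chi_U$ with multiplicity $m$, whence $\chi(1)=\chi_U(1)\geq m\lambda(1)$. Substituting this bound into $|G:U|\lambda(1)=m\chi(1)$ gives $|G:U|\lambda(1)\geq m^2\lambda(1)$, and dividing by the positive integer $\lambda(1)$ yields $|G:U|\geq m^2$.

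There is no substantive obstacle here: all three assertions reduce to one-line consequences of standard character-theoretic tools. The only minor point worth noting is the application of Frobenius reciprocity to the scaled character $m\chi$, which is legitimate by the bilinearity of $\langle\,\cdot\,,\,\cdot\,\rangle$; otherwise the entire argument is mechanical.
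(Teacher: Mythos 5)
Your proposal is correct and follows essentially the same route as the paper: part (i) via the induced-character formula (the paper states it directly rather than by contrapositive), and part (ii) via evaluation at the identity plus Frobenius reciprocity giving $\chi_U = m\lambda + \psi$ and hence $\chi(1)\geq m\lambda(1)$, from which $|G:U|\geq m^2$ follows exactly as you argue. No gaps.
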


\begin{proof}

As $\lambda^G=m\chi,$ if $g\in G$ with $\chi(g)\neq 0,$ then
$\lambda^G(g)=m\chi(g)\neq 0.$ By the definition of induced characters,
we have that $xgx^{-1}\in U$ for some $x\in G,$ which proves $(i).$
For $(ii),$ by comparing the degrees, we have
$\lambda^G(1)=m\chi(1),$ which implies that
$|G:U|\lambda(1)=m\chi(1).$ By the Frobenius reciprocity, we have
$m=(\lambda^G,\chi)=(\lambda,\chi_U)$ so $\chi_U=m\lambda+\psi$
for some character $\psi$ of $U.$ Hence
$\chi(1)=m\lambda(1)+\psi(1)\geq m\lambda(1),$ which proves the
second statement of $(ii).$ Finally, we have
$|G:U|\lambda(1)=m\chi(1)\geq m^2\lambda(1),$ which deduces that
$|G:U|\geq m^2.$ \end{proof}

Let $\chi\in\Irr(G)$ be an $\mi$ character induced from
$(U,\lambda),$ that is, $m\chi=\lambda^G$ for some $m\geq 1.$ We
will show that $U$ could be chosen to be a maximal subgroup of $G.$
By definition, $U$ is a proper subgroup of $G,$ and thus there is a
maximal subgroup  $H$ of $G$ that contains $U.$ Let $\mu\in\Irr(H)$
be an irreducible constituent of $\lambda^H.$  Write
$\lambda^H=\mu+\psi,$ where $\psi$ is a character of $H.$ By the
transitivity of character induction, we have that
$(\lambda^H)^G=\lambda^G=m\chi$ so $\mu^G+\psi^G=m\chi.$ Thus
$\mu^G=e\chi$ for some $e\geq 1,$ which means that $\chi$ is an $\mi$
character induced from $(H,\mu),$ where $H$ is maximal in $G$ and
$\mu\in\Irr(H).$

The next result is similar to Lemma~2.8 in
\cite{Qian}.

\begin{lemma}\label{lem3} Let $N$ be a normal subgroup of a group $G$
and let $\theta\in\Irr(N)$ be a nonlinear character of $N.$ Suppose
that $\theta$ extends to $\chi\in\Irr(G).$ If $\chi$ is an
$\mi$ character of $G,$ then $\theta$ is also an $\mi$
character of $N.$
\end{lemma}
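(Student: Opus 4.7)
The plan is to apply Mackey's restriction formula to the hypothesis $\lambda^G = m\chi$ (with $U\lneq G$, $\lambda\in\Irr(U)$ and $m\geq 1$) in order to extract an $\mi$-structure for $\theta$ on $N$ from a single summand of the decomposition.

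Restricting both sides of $\lambda^G = m\chi$ to $N$ and using $\chi_N = \theta$, I would obtain $(\lambda^G)_N = m\theta$. Mackey's decomposition then expresses this as
$$\sum_{g \in [N\backslash G/U]} \bigl((\lambda^g)_{N\cap U^g}\bigr)^N = m\theta.$$
Since every summand on the left is a character of $N$ that is positive at the identity (its value there equals $[N:N\cap U^g]\lambda(1)>0$) and the right-hand side has $\theta$ as its only irreducible constituent, each summand must be a positive integer multiple of $\theta$. Taking the double-coset representative $g=1$ and setting $V:=N\cap U$, this gives $(\lambda_V)^N = n\theta$ for some $n\geq 1$. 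Then for any irreducible constituent $\mu\in\Irr(V)$ of $\lambda_V$, the induced character $\mu^N$ sits inside $n\theta$, so $\mu^N=k\theta$ with $k\geq 1$; provided $V<N$, this says precisely that $\theta$ is an $\mi$ character of $N$ induced from the pair $(V,\mu)$.

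The main obstacle is therefore ruling out the boundary case $V=N$, that is, $N\leq U$. I would handle this by noting that, in that situation, the normality of $N$ forces $N\leq U^g$ for every $g\in G$, so the Mackey sum collapses to $\sum_{g\in [G/U]}(\lambda^g)_N = m\theta$ with exactly $|G:U|$ summands, each a positive integer multiple of $\theta$. Comparing multiplicities of $\theta$ yields $m\geq |G:U|$; combining this with the inequality $|G:U|\geq m^2$ supplied by Lemma~\ref{lem2}(ii) forces $m=|G:U|=1$, contradicting $U\lneq G$. Hence $V<N$ must hold and the construction above produces the required $\mi$-structure for $\theta$.
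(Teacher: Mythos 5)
Your proof is correct and follows essentially the same route as the paper: restrict $\lambda^G=m\chi$ to $N$, apply Mackey's decomposition, observe that every summand is a positive multiple of $\theta$, and split on whether $N\leq U$. The only differences are cosmetic --- the paper first replaces $U$ by a maximal overgroup so that in the case $N\nleq U$ the Mackey sum reduces to the single term $(\lambda_{U\cap N})^N=m\theta$, and in the case $N\leq U$ it reaches the contradiction via $\chi(1)\geq m\lambda(1)$ and $|G:U|\lambda(1)=m\chi(1)$ rather than via your double-coset count $m\geq |G:U|\geq m^2$; both variants are valid.
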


\begin{proof}

Assume that $\chi$ is an $\mi$ character of $G.$ Then there
exist a proper subgroup $U\lneq G,$ $\lambda\in\Irr(U)$ and $m\geq
1$ such that $m\chi=\lambda^G.$ Assume that
$T=\{r_1,r_2,\cdots,r_t\}$ is a set of representatives for the
double cosets of $U$ and $N$ in $G.$ As $\chi$ is an extension of
$\theta,$ we have that $\chi_N=\theta.$ By the discussion above, we
can and will assume that $U$ is maximal in $G.$
As $\lambda^G=m\chi$ and $\chi_N=\theta,$ we deduce that
$m\theta=(\lambda^G)_N.$ By Mackey's Lemma, we have that
$$(\lambda^G)_N=\sum_{j=1}^t ((\lambda^{r_j})_{U^{r_j}\cap N})^N$$
so $$m\theta=\sum_{j=1}^t ((\lambda^{r_j})_{U^{r_j}\cap
N})^N.$$ It follows that for each $j,$ we obtain that
$((\lambda^{r_j})_{U^{r_j}\cap N})^N$ is a multiple of $\theta.$ In
particular, we have that $(\lambda_{U\cap N})^N=k\theta$ for some
$k\geq 1.$

Assume first that $N\leq U.$ We then have that $\lambda_N=k\theta.$
By Lemma \ref{lem2}$(ii),$ we obtain that $\chi(1)=\theta(1)\geq
m\lambda(1)=mk\theta(1),$ which implies that $mk=1,$ hence $m=k=1.$
By Lemma \ref{lem2}$(ii)$ again, we have $|G:U|\lambda(1)=m\chi(1)$
and thus $|G:U|k\theta(1)=m\theta(1),$ which implies that $|G:U|=1,$
a contradiction.

Assume next that $N\nleq U.$ As $U$ is
maximal in $G,$ we obtain that $G=UN.$ Hence by Mackey's Lemma, we
have that $$(\lambda^G)_N=(\lambda_{U\cap N})^N=m\theta.$$ Since
$N\nleq U,$ we deduce that $U\cap N\lneq N.$ Let $\mu\in\Irr(U\cap
N)$ be an irreducible constituent of $\lambda_{U\cap N}.$ It follows
that $\mu^N=l\theta$ for some $l\geq 1.$ Hence $\theta$ is an $\mi$
character of $N$ induced from $(U\cap N,\mu)$ as required.
\end{proof}

\begin{lemma}\label{lem4}
Suppose that $N$ is a unique minimal normal nonabelian subgroup of a group $G.$ Assume that
$N=R_1\times R_2\times\cdots\times R_k,$ where each $R_i$ is isomorphic to a nonabelian simple
group $R,$ and $k\geq 1.$ Let $\theta$ be a nonlinear irreducible character of $R$ such that
$\theta$ extends to $\Aut(R).$ Let $\varphi=\theta^k\in\Irr(N).$ If $\chi\in\Irr(G)$ is any
extension of $\varphi$ to $G$ and $\chi$ is an $\mi$ character of $G,$ then $\theta$ is an $\mi$
character of $R.$
\end{lemma}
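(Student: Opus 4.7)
The plan is straightforward: first apply Lemma \ref{lem3} to reduce from the pair $(G,\chi)$ to $(N,\varphi)$, and then restrict to a single factor $R_1$ to extract an $\mi$ witness for $\theta$ on $R\cong R_1$.

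Since $\varphi=\theta^k$ is nonlinear, extends to $\chi\in\Irr(G)$, and $\chi$ is an $\mi$ character of $G$, Lemma \ref{lem3} immediately yields that $\varphi$ is an $\mi$ character of $N$. Write $\mu^N=m\varphi$ with $W\lneq N$, $\mu\in\Irr(W)$, and $m\geq 1$. Because $N=R_1R_2\cdots R_k$ and $W\lneq N$, at least one factor $R_i$ is not contained in $W$; after relabelling, assume $R_1\nleq W$ and set $A=W\cap R_1\lneq R_1$.

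The key step is to compute $(\mu^N)|_{R_1}$ via Mackey's formula. Because $R_1\unlhd N$ (as a direct factor), for every double-coset representative $t\in R_1\backslash N/W$ one has $R_1\cap W^t=A^t$; moreover, conjugation by $t$ on $R_1$ is induced by the $R_1$-component of $t$ and is therefore inner in $R_1$. Consequently each Mackey summand $((\mu^t)|_{A^t})^{R_1}$ equals $(\mu|_A)^{R_1}$ as a character of $R_1$, and counting the $|N:W|/|R_1:A|$ double cosets gives
\[
(\mu^N)|_{R_1}\;=\;\frac{|N:W|}{|R_1:A|}\,(\mu|_A)^{R_1}.
\]
On the other hand, $(\mu^N)|_{R_1}=m\,\varphi|_{R_1}=m\,\theta(1)^{k-1}\theta$, a positive integer multiple of $\theta$. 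Hence $(\mu|_A)^{R_1}=q\theta$ for some integer $q\geq 1$. Writing $\mu|_A=\sum_i c_i\mu_i$ with $\mu_i\in\Irr(A)$ distinct and $c_i\in\ZZ_{\geq 0}$, the identity $\sum_i c_i\mu_i^{R_1}=q\theta$ and the linear independence of $\Irr(R_1)$ force $\mu_i^{R_1}$ to be a nonnegative integer multiple of $\theta$ whenever $c_i>0$, with at least one of these multiples strictly positive. Such a $\mu_i\in\Irr(A)$, together with $A\lneq R_1\cong R$, exhibits $\theta$ as an $\mi$ character of $R$, as desired.

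The only real delicacy is the Mackey step, where the collapse of each $t$-twist to $(\mu|_A)^{R_1}$ relies crucially on $R_1$ being a direct factor and thus normal in $N$. Note that the $\Aut(R)$-invariance of $\theta$ is used only to ensure that the extension $\chi$ exists in the hypothesis; it plays no further role in the argument.
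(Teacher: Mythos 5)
Your proof is correct and follows essentially the same route as the paper: Lemma \ref{lem3} reduces the problem to $(N,\varphi)$, and a Mackey restriction to a direct factor $R_1\nleq W$ produces the $\mi$ witness for $\theta$ on $R_1\cong R$. The only difference is cosmetic — the paper first replaces the inducing subgroup by a maximal one so that $N=R_1U$ and the Mackey formula has a single term, whereas you keep a general $W$ and observe that normality of the direct factor $R_1$ makes all double-coset summands coincide; both computations are valid.
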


\begin{proof} Since $N\cong R^k$ is the unique minimal
normal subgroup of $G,$ we deduce that $G$ embeds into $\Aut(N)\cong
\Aut(R)\wr S_k,$ where $S_k$ denotes the symmetric group of
degree $k.$ As $\theta\in\Irr(R)$ extends to $\Aut(R),$ by \cite[Lemma~2.5]{Bianchi} we deduce that $\varphi=\theta^k\in \Irr(N)$ extends to
$G.$ Assume that $\chi\in \Irr(G)$ is an extension of $\varphi$ and that $\chi$ is an $\mi$ character of $G.$ By
Lemma \ref{lem3}, we deduce that $\varphi$ is an $\mi$ character of
$N$ induced from $(U,\lambda),$ where $U$ is a maximal subgroup of
$N,$ and $\lambda\in\Irr(U).$ Then $m\varphi=\lambda^N$ for some
$m\geq 1.$ As $N=R_1\times R_2\times\cdots\times R_k$ and $U$ is
maximal in $N,$ there exists $1\leq i\leq k$ such that $R_i\nleq U.$
Without loss of generality, we assume that $R_1\nleq U.$ Since $R_1\unlhd N,$ we
obtain that $N=R_1U,$ here we identify $R_1$ with $R_1\times
1\times\cdots \times 1\unlhd N.$ Observe that
$\varphi_{R_1}=\theta(1)^{k-1}\theta_1,$ where $\theta_1\in\Irr(R_1)$  is $N$-invariant. Since $N=R_1U,$ by Mackey's Lemma, we
obtain that $(\lambda^N)_{R_1}=\lambda_{U_1}^{R_1},$ where
$U_1:=R_1\cap U\lneq R_1.$ Then it follows from
$m\varphi_{R_1}=(\lambda^N)_{R_1}$ that
$m\theta(1)^{k-1}\theta_1=\lambda_{U_1}^{R_1}.$  Let
$\lambda_1\in\Irr(U_1)$ be any irreducible constituent of
$\lambda_{U_1},$ we then have that $\lambda_1^{R_1}=m_1\theta_1$ for
some $m_1\geq 1.$ Therefore we conclude that $\theta_1\in\Irr(R_1)$
is an $\mi$ character of $R_1.$ Hence $\theta$ is an $\mi$ character of
$R$ as wanted.
\end{proof}


\section{Finite simple groups of Lie type}\label{Lietypegroups}

In this section, we aim to prove Theorem \ref{th2} for simple groups of Lie type. Note that we will
consider the Tits group as a sporadic simple group rather than a simple group of Lie type and
exclude it from consideration in this section. Now it is well known that every simple group of Lie
type $S$ in characteristic $p$ possesses an irreducible character of degree $|S|_p,$ the size of
the Sylow $p$-subgroup of $S,$ which is called the Steinberg character of $S$ and is denoted by
$\St_S.$ (See \cite[Chapter~6]{Carter}.) Moreover the Steinberg character of $S$ is always
extendible to the full automorphism group $\Aut(S).$ (See for instance \cite[Theorem~2]{Bianchi}.)
Using the information on the character values of the Steinberg character given in \cite{Carter} and
also the classification  of the maximal subgroups of simple groups of Lie type satisfying certain
properties given in \cite{Liebeck,Malle}, we will prove that apart from some exceptions, the
Steinberg character cannot be an $\mi$ character. This is achieved in Lemma \ref{Steinberg}.
Finally, for these exceptions,  using \cite{atlas} we will find another nonlinear irreducible
character of $S$ which extends to $\Aut(S)$ but it is not an $\mi$ character.

We first draw some consequences under the assumption that the Steinberg character is an $\mi$
character. Recall that if $G$ is a group and $p\in\pi(G),$ then an element $g\in G$ is called
\emph{$p$-semisimple}  (or just \emph{semisimple} when $p$ is understood) whenever the order of $g$
is coprime to $p.$
\begin{lemma}\label{consequences} Let $S$ be a simple group of Lie type in characteristic $p.$ Suppose that $m\St_S=\lambda^S,$ where
$\lambda\in\Irr(H),m\geq 1$ and $H$ is a maximal subgroup of $S.$ Then the following hold.

\begin{enumerate}
\item $g^S\cap H\neq \emptyset\:\: \mbox{for any $p$-semisimple element $g\in S.$ }$
\item $p\nmid m \mbox{ and }\lambda(1)_p=|H|_p.$
\item $|S:H|_p\geq m\geq |S:H|_{p'}.$
\end{enumerate}
\end{lemma}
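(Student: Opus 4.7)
My plan is to leverage two classical facts about the Steinberg character $\St_S$: its degree is $|S|_p$, and it vanishes on every element of $S$ that is not $p$-semisimple, while taking the nonzero value $\pm |C_S(g)|_p$ on any $p$-semisimple $g$. With these, statements (1)--(3) follow by combining the induction identity $\lambda^S = m\St_S$ with Lemma \ref{lem2} and some elementary $p$-part/$p'$-part arithmetic.

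For (1), I would simply observe that if $g \in S$ is $p$-semisimple then $\lambda^S(g) = m\St_S(g) \neq 0$, so by Lemma \ref{lem2}(i) the class $g^S$ meets $H$. This part is immediate.

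The heart of the argument is (2) and the lower bound in (3). Comparing degrees in $\lambda^S = m\St_S$ yields
\[
|S:H|\,\lambda(1) \;=\; m\,|S|_p.
\]
Since $\lambda(1)$ divides $|H|$, in particular $\lambda(1)_p \leq |H|_p$. Taking $p$-parts of the displayed equation gives $|S:H|_p\,\lambda(1)_p = m_p\,|S|_p$, and since $|S|_p = |S:H|_p\,|H|_p$ this simplifies to $\lambda(1)_p = m_p\,|H|_p$. Combined with $\lambda(1)_p \leq |H|_p$, this forces $m_p = 1$ and $\lambda(1)_p = |H|_p$, which is exactly (2). Taking $p'$-parts of the same equation and using $p \nmid m$ then gives $m = |S:H|_{p'}\,\lambda(1)_{p'} \geq |S:H|_{p'}$, the lower bound in (3).

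For the upper bound $m \leq |S:H|_p$, I plan to invoke Lemma \ref{lem2}(ii), which gives $|S:H| \geq m^2$. Writing $|S:H| = |S:H|_p\,|S:H|_{p'}$ and substituting $m = |S:H|_{p'}\,\lambda(1)_{p'}$ from the previous paragraph yields
\[
|S:H|_p\,|S:H|_{p'} \;\geq\; |S:H|_{p'}^{\,2}\,\lambda(1)_{p'}^{\,2},
\]
so $|S:H|_p \geq |S:H|_{p'}\,\lambda(1)_{p'}^{\,2} = m\,\lambda(1)_{p'} \geq m$, as desired. There is no real obstacle here beyond careful bookkeeping of $p$-parts and $p'$-parts; the only nontrivial input is the vanishing behavior of $\St_S$, which is exactly what makes it the right character to exploit for the coming arithmetic restrictions on maximal subgroups $H$ in Lemma \ref{Steinberg}.
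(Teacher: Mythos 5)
Your proposal is correct and follows essentially the same route as the paper: part (1) from the nonvanishing of $\St_S$ on $p$-semisimple elements together with Lemma \ref{lem2}(i), part (2) by comparing $p$-parts in the degree equation $|S:H|\lambda(1)=m|S|_p$ using $\lambda(1)_p\mid |H|_p$, and part (3) by combining the resulting identity $m=|S:H|_{p'}\lambda(1)_{p'}$ with the bound $m^2\leq |S:H|$ from Lemma \ref{lem2}(ii). The only cosmetic difference is that you phrase the upper bound as $|S:H|_p\geq m\lambda(1)_{p'}\geq m$ while the paper writes $m^2\leq |S:H|_p|S:H|_{p'}\leq m|S:H|_p$; these are the same computation.
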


\begin{proof}
By \cite[Theorem~6.5.9]{Carter}, we have $\St_S(g)=\pm|C_S(g)|_p$ for any $p$-semisimple element
$g\in S.$ Thus for any $p$-semisimple element $g\in S,$ we obtain that $\lambda^S(g)=m\St_S(g)\neq
0.$  By the definition of induced characters, we obtain  $(1).$ For $g=1\in S,$ we have that
$m\St_S(1)=|S:H|\lambda(1).$ As $\St_S(1)=|S|_p$ and $\lambda(1)_p\mid |H|_p,$ we deduce that
$|S:H|_p\lambda(1)_p$ divides $|S|_p$ and so $|S:H|_p\lambda(1)_p=|S|_p$ as it is divisible by
$\St_S(1).$ This implies that $p\nmid m$ and $\lambda(1)_p=|H|_p,$ which proves $(2).$ Finally as
$m\St_S(1)=|S:H|\lambda(1),$ by applying $(2)$ we have $$m=|S:H|_{p'}\lambda(1)_{p'}\geq
|S:H|_{p'}.$$ By Lemma \ref{lem2}(ii), we obtain that  $m^2\leq |S:H|$ and so $$m^2\leq
|S:H|_p|S:H|_{p'}\leq m|S:H|_p,$$ which implies that $|S:H|_p\geq m\geq |S:H|_{p'}$ as required.
\end{proof}
The following result is a well known theorem due to Zsigmondy.
\begin{lemma}\emph{(See \cite[Theorems $5.2.14,5.2.15$]{KleidmanLiebeck}).}\label{Zsigmondy}
Let $q$ and $n$ be integers with $q\geq 2$ and $n\geq 3.$ Assume that $(q,n)\neq (2,6).$ Then $q^n-1$ has a prime divisor $\ell$ such
that

\begin{itemize}
\item $\ell$ does not divide $q^m-1$ for $m<n.$
\item If  $\ell\mid q^k-1$ then $n\mid k.$
\item $\ell\equiv 1~\mbox{\emph{(mod $n$)}}.$
\end{itemize}
\end{lemma}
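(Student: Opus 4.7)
The plan is to prove this classical Zsigmondy theorem on primitive prime divisors of $q^n-1$ via cyclotomic polynomials. First, I would observe that the three bullets are really a single existence claim together with two easy corollaries. If a prime $\ell$ divides $q^n-1$ but no $q^m-1$ for $m<n$, then the multiplicative order of $q$ in $(\ZZ/\ell)^\times$ is exactly $n$; consequently $n$ divides every $k$ with $\ell\mid q^k-1$ (second bullet), and by Lagrange $n$ divides $|(\ZZ/\ell)^\times|=\ell-1$, giving $\ell\equiv 1\pmod n$ (third bullet). So the task reduces to producing such a \emph{primitive prime divisor} of $q^n-1$.

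Second, I would use the cyclotomic factorization
\[ q^n-1=\prod_{d\mid n}\Phi_d(q), \]
where $\Phi_d\in\ZZ[x]$ is the $d$-th cyclotomic polynomial. A primitive prime divisor is exactly a prime $\ell$ dividing $\Phi_n(q)$ but no $\Phi_d(q)$ with $d\mid n$, $d<n$. The interference between different cyclotomic values is governed by the standard lemma that if $\ell$ divides both $\Phi_n(q)$ and $\Phi_d(q)$ for some proper divisor $d$ of $n$, then $\ell\mid n$: indeed the image $\bar q$ would then be a repeated root of $x^n-1$ in $\f_\ell[x]$, forcing $\ell\mid nq^{n-1}$, and $\gcd(q,\ell)=1$ since $\ell\mid q^n-1$. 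Hence it is enough to exhibit one prime divisor of $\Phi_n(q)$ that does not divide $n$.

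Third, I would establish this existence by a size comparison, arguing by contradiction. Suppose every prime divisor of $\Phi_n(q)$ divides $n$. A refinement of the interference lemma shows that $v_\ell(\Phi_n(q))\le 1$ for each such $\ell$, with at most one exceptional prime contributing a slightly larger factor, whence $\Phi_n(q)\le n$ up to a small constant. On the other hand, using the product $\Phi_n(q)=\prod_\zeta(q-\zeta)$ over primitive $n$-th roots of unity yields the lower bound $\Phi_n(q)\ge (q-1)^{\varphi(n)}$, strengthened by a standard estimate to $\Phi_n(q)>q^{\varphi(n)/2}$ for $q\ge 2$, $n\ge 3$. Combining the two bounds forces $(q,n)$ into a short finite list, and a direct check of these small cases reveals the only genuine obstruction to be the excluded pair $(q,n)=(2,6)$, where indeed $\Phi_6(2)=3$ divides $n=6$ and there is no primitive prime divisor.

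The main obstacle is the bookkeeping in the third step: sharpening the interference lemma to control $v_\ell(\Phi_n(q))$ precisely when $\ell\mid n$, and then executing the finite case analysis cleanly enough to isolate exactly the pair $(q,n)=(2,6)$ (while verifying that the small degenerate case $n=2$, excluded here by hypothesis, does not re-enter). Once those numerical bounds are in place, the cyclotomic framework delivers the three conclusions of the lemma essentially for free.
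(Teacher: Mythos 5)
Your outline is mathematically sound, but it is worth noting that the paper does not prove this lemma at all: it is quoted verbatim as a known result (Zsigmondy's theorem) with a citation to Kleidman--Liebeck, Theorems 5.2.14--5.2.15, and is used later only as a black box to produce primitive prime divisors $\ell_n(q)$ and $\ell_{-n}(q)$. What you have written is the standard self-contained cyclotomic proof: reduce the three bullets to the existence of a prime whose multiplicative order modulo which $q$ has order exactly $n$; factor $q^n-1=\prod_{d\mid n}\Phi_d(q)$; show that a non-primitive prime divisor of $\Phi_n(q)$ must divide $n$ (via the repeated-root/derivative argument in $\f_\ell[x]$), is necessarily the largest prime factor of $n$, and contributes only to the first power when $n\geq 3$; and then beat the resulting upper bound $\Phi_n(q)\leq n$ with a lower bound on $|\Phi_n(q)|=\prod_\zeta|q-\zeta|$. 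That buys a proof from first principles where the paper buys nothing but a reference. The one place your sketch glosses over a real difficulty is the lower bound for $q=2$: the easy estimate $\Phi_n(q)\geq(q-1)^{\varphi(n)}$ degenerates to $\Phi_n(2)\geq 1$, and the sharper inequality $\Phi_n(q)>q^{\varphi(n)/2}$ that you invoke does not follow from it when $q=2$ (since $(q-1)^2\geq q$ fails there); one needs a separate pairing or $3$-adic-style estimate for $\Phi_n(2)$, and this is precisely where the exceptional pair $(2,6)$ with $\Phi_6(2)=3$ emerges. You flag this as bookkeeping, which is fair, but it is the genuinely delicate step rather than a routine verification.
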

Such an $\ell$ is called a \emph{primitive prime divisor}. We denote by
$\ell_n(q)$ the smallest primitive prime divisor of $q^n-1$ for fixed
$q$ and $n.$ When $n$ is odd and $(q,n)\neq (2,3)$ then there is a
primitive prime divisor of $q^{2n}-1$ which we denote by $\ell_{-n}(q).$

Let $S$ be a nonabelian simple group. If $S\unlhd G\leq \Aut(S),$ then $G$ is said to be an {\em
almost simple group} with socle $S$ and we write $\rm{soc}(G)$ to denote the socle $S$ of $G.$ We
refer to \cite[Chapter~4]{KleidmanLiebeck} for the detailed descriptions and definitions of the
geometric classes $\mathcal{C}_i(S)$ ($1\leq i\leq 8$) and the class $\mathcal{S}(S)$ of subgroups
of simple classical groups $S.$

Suppose that $S$ is a simple group of Lie type in characteristic $p$ and that $\St_S$ is an $\mi$
character of $S$ induced from the pair $(H,\lambda),$ where $H$ is a maximal subgroup of $S$ and
$\lambda\in\Irr(H).$ It follows from Lemma \ref{consequences}$(1)$ that $g^S\cap H\neq \emptyset$
for any semisimple element $g\in S$ and thus $\pi_{p'}(S)\subseteq \pi(H),$ where $\pi_{p'}(S)$ is
the set of all primes divisors of $|S|$ different from the characteristic $p.$ Hence two cases can
occur, either $\pi(H)=\pi(S)$ or $\pi(H)=\pi_{p'}(S).$  We first consider the case
$\pi(H)=\pi_{p'}(S).$ This implies that $|H|$ is prime to $p$ and that $H$ possesses some
semisimple element of certain maximal torus of $S.$ Hence we can apply \cite{Malle} for classical
groups and \cite{Liebeck} for exceptional groups of Lie type to obtain the possibilities for the
pairs $(S,H).$

\begin{lemma}\label{Liegroups1}
Let $S$ be a nonabelian simple group of Lie type in characteristic $p$ and let $H$ be a maximal
subgroup of $S.$ Suppose that $p\nmid |H|$ and that for any $p$-semisimple element $g\in S,$ we
have that $g^S\cap H\neq \emptyset.$ Then
$$(S,H)\in\{(\rm{L_2(5)},\rm{A}_4), (\rm{L}_2(5),\rm{S}_3), (\rm{L}_2(7), \rm{S_4}), (\rm{L}_3(2),7:3)\}.$$
\end{lemma}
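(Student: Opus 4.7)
The strategy is to translate the hypothesis into strong arithmetic constraints on $|H|$ via Zsigmondy's theorem (Lemma \ref{Zsigmondy}), and then consult the classifications of Malle \cite{Malle} for classical types and Liebeck \cite{Liebeck} for exceptional types to pin down the surviving pairs $(S,H)$.

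Since $H$ meets every $S$-class of $p$-semisimple elements, $H$ contains an element of each order realised by a semisimple element of $S$. In particular, for every $n$ such that $\ell_n(q)$ is a primitive prime divisor of $q^n-1$ that appears as the order of an element of some maximal torus of $S$, we have $\ell_n(q)\mid|H|$. Standard torus-order computations show that, for a simple group $S$ of Lie type of rank $r$ over $\mathbb{F}_q$, the set $\Pi(S)$ of such integers $n$ is already large: for the classical types it contains integers up to roughly $2r$, and for the exceptional types it contains the degrees of the fundamental invariants. Combined with the hypothesis $p\nmid|H|$, this forces $|H|$ to be a $p'$-number simultaneously divisible by many distinct Zsigmondy primes, which is a very tight constraint.

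The next step is to invoke the cited classifications. For each $S$ of sufficiently large rank, \cite{Malle,Liebeck} list the maximal subgroups of $S$ whose order is divisible by $\ell_n(q)$ for $n$ at the top of $\Pi(S)$. Among those, parabolic subgroups and any $\mathcal{C}_i$-type subgroups with a nontrivial unipotent radical are immediately excluded by $p\nmid|H|$; the remaining candidates are (essentially) normalizers of maximal tori, subfield subgroups in class $\mathcal{C}_5$, and a short list of almost simple subgroups in class $\mathcal{S}$. A direct inspection shows that, once the rank of $S$ is large enough, each such candidate misses the order $\ell_m(q)$ for some smaller $m\in\Pi(S)$, contradicting the hypothesis. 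So only finitely many $S$ survive.

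Finally, the problem reduces to a finite list of small-rank groups, essentially the rank-one families $L_2(q)$, $U_3(q)$, ${}^2B_2(q)$, ${}^2G_2(q)$ together with a handful of rank-two cases. For these I would argue by direct inspection: using \cite{atlas} (or the elementary classification of maximal subgroups of $L_2(q)$), for each $p'$-maximal subgroup $H$ compare the set of element orders of $H$ with the full set of semisimple element orders of $S$, and retain only the pairs where the latter is contained in the former. The only survivors are the four listed pairs $(L_2(5),A_4)$, $(L_2(5),S_3)$, $(L_2(7),S_4)$, $(L_3(2),7{:}3)$. The main technical obstacle will be the intermediate-rank classical cases, where the Zsigmondy count is tight and must be carried out family by family; the genuinely small exceptions arise precisely because for $L_2(5)$ and $L_2(7)$ there are almost no primitive prime divisors available to force a contradiction.
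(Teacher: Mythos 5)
Your proposal follows essentially the same route as the paper: translate the hypothesis into the requirement that $|H|$ be a $p'$-number divisible by the Zsigmondy primes arising from the maximal tori of $S$, invoke the classifications of Malle (classical types) and Liebeck--Praeger--Saxl (exceptional types) to list the candidate maximal subgroups, eliminate all but finitely many by $p\nmid|H|$ and missing element orders, and finish the small-rank cases ($\mathrm{L}_2(q)$ via King's list, the rest via the Atlas) by direct inspection, arriving at the same four pairs. The only cosmetic difference is that you keep subfield subgroups of class $\mathcal{C}_5$ as candidates a step longer than necessary (their order is always divisible by $p$), but this does not change the argument.
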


\begin{proof}

It follows from the hypotheses that $\pi_{p'}(S)=\pi(H).$

{\bf Case $1.$} {$S$ is a simple classical group in characteristic $p$ defined over a field of size
$q=p^f$.} For each $p$-semisimple element $g\in S,$ by the hypotheses some conjugate of $g$ belongs
to $H.$ In particular $H$ possesses some element of the maximal torus of $S$ with order given in
\cite[Table~I]{Malle}. By \cite[Theorem~1.1]{Malle}, the following cases hold.

$\bf (A)$ $H\in \cup_{i=1}^8 \mathcal{C}_i(S).$ Then one of the following cases holds.

{\bf $(A_1)$} $H\in \mathcal{C}_1(S)$ and $S\in\{\textrm{L}^\epsilon_n(q), \textrm{O}_{2n+1}(q),
\textrm{O}_{2n}^+(q)\},$ where $n$ is at least $3,3,$ and $4,$ respectively. By inspecting the
orders of maximal subgroups in class $\mathcal{C}_1(S)$ in \cite[$\S4.1$]{KleidmanLiebeck} and
using the restriction on $n,$ we see that $p$ always divides $|H|.$ Hence this subcase cannot
happen.

{\bf $(A_2)$} $H\in \mathcal{C}_8(S)$ and $S\cong \textrm{S}_{2n}(q),$ where $n\geq 2.$ By
\cite[Proposition~4.8.16]{KleidmanLiebeck} we have that $H\cong \textrm{O}_{2n}^\epsilon(q)$ with
$q$ even. Obviously   $p$ always divides $|H|.$

{\bf $(A_3)$} $H\in \mathcal{C}_3(S)$ and $S\in\{\textrm{L}^\epsilon_n(q)(n\geq 3 \mbox{ odd}),
\textrm{S}_{2n}(q) (n\geq 2), \textrm{O}_{2n}^\epsilon(q)(n\geq 4)\}.$ By inspecting the orders of
maximal subgroups in class $\mathcal{C}_3(S)$ in \cite[$\S4.3$]{KleidmanLiebeck} and using the
restriction on $n,$ we see that $p$ always divides $|H|$ unless $S\cong \textrm{L}_n^\epsilon(q)$
where $n$ is an odd prime and $H$ is of type $\textrm{GL}_1^\epsilon(q^n)\cdot n.$

Assume that $n=3.$ By \cite[Proposition~4.3.6]{KleidmanLiebeck} , we have $|H|=3(q^2+\epsilon
q+1)/d,$ where $d=\gcd(3,q-\epsilon 1).$ In this case, $S$ has an element of order $(q^2-1)/d.$  It
follows that $(q^2-1)/d$ must divide $3(q^2+\epsilon q+1)/d,$ and so $(q^2-1)=(q-\epsilon
1)(q+\epsilon 1)$ divides $3(q^2+\epsilon q+1).$ As $\gcd(q+\epsilon 1,q^2+\epsilon q+1)=1,$ we
deduce that $q+\epsilon 1\mid 3,$ which implies that $q+\epsilon 1=1$ or $q+\epsilon 1=3$ since
$q+\epsilon 1>0. $ Solving these equations, we obtain that $q=2$ and $\epsilon=\pm$ or $q=4$ and
$\epsilon=-.$ Since $\textrm{U}_3(2)$ is not simple and $4^2-1 \nmid 3(4^2-4+1),$ we deduce that
$S\cong \textrm{L}_3(2)$ and  $H\cong 7:3.$

Now suppose that $n\geq 5$ is odd prime.  Assume first that both $\ell_{n-1}(q)$ and
$\ell_{\epsilon (n-2)}(q)$ exist. Observe that  these two primes are distinct. Then $S$ has
elements of orders $\ell_{n-1}(q)$ and $\ell_{\epsilon (n-2)}(q),$ respectively. Thus
$\ell_{n-1}(q)$ and $\ell_{\epsilon (n-2)}(q)$ divides $|H|.$ By Lemma \ref{Zsigmondy} neither
$\ell_{n-1}(q)$ nor $\ell_{\epsilon (n-2)}(q)$ can divide
$|\textrm{GL}_{1}^\epsilon(q^n)|=q^n-\epsilon 1$ since both $n-1$ and $n-2$ cannot divide $2n$ as
$n\geq 5$ is odd prime. As a result, both  $\ell_{n-1}(q)$ and $\ell_{\epsilon (n-2)}(q)$ must be
equal to the prime $n$ as $|H|\mid n(q^n-\epsilon 1),$ which is impossible. We now consider the
case when either $\ell_{n-1}(q)$ or $\ell_{\epsilon (n-2)}(q)$ does not exist. By Lemma
\ref{Zsigmondy}, we deduce that $S\cong \textrm{L}_7^\epsilon (2)$ or $S\cong \textrm{U}_5(2).$ If
the first case holds, then $\ell_{\epsilon 5}(2)\in \pi(S)$ exists. However we see that
$\ell_{\epsilon 5}(2)$ cannot divide $7(2^7-\epsilon 1)$ so $\ell_{\epsilon 5}(2)$ cannot divide
$|H|,$ a contradiction. For the latter case, we have $H\cong 11:5.$ But
$\pi_{2'}(\textrm{U}_5(2))=\{3,5,11\}\neq \pi(H).$

$\bf(B)$ $H\in \mathcal{S}(S).$ Then the following cases hold.

\begin{itemize}
\item[\bf($B_1$)] $(S,H)\in\{ (\textrm{L}_4(2),\textrm{A}_7),
(\textrm{U}_3(3),\textrm{L}_2(7)),(\textrm{U}_3(5),\textrm{A}_7),(\textrm{U}_4(3),\textrm{A}_7),$\\$
(\textrm{U}_4(3),\textrm{L}_3(4)),
(\textrm{U}_5(2),\textrm{L}_2(11)),(\textrm{U}_6(2),\textrm{M}_{22}),(\textrm{O}_7(3),\textrm{S}_9),(\textrm{S}_8(2),\textrm{L}_2(17))\}.$

\item[\bf ($B_2$)] $(S,\rm{soc}(H))=(\textrm{O}_8^+(q),\textrm{O}_7(q))$ with $q$ odd, or $(\textrm{O}_8^+(q),\textrm{S}_6(q))$ with $q$ even.
\end{itemize}

For these cases, we see that the characteristic $p$ of $S$ divides the order of $H.$

$\bf(C)$ $S\cong \textrm{L}_2(q),\textrm{U}_4(2)$ or $(S,H)\in\{ (
\textrm{L}_3(4),\textrm{L}_3(2)),( \textrm{S}_4(3),2^4\cdot  \textrm{A}_5),(
\textrm{O}_8^+(2),\textrm{A}_9)\}.$

{$(C_1)$} $S\cong \textrm{U}_{4}(2).$  As $\textrm{U}_4(2)\cong \textrm{S}_4(3),$ the
characteristic $p$ of $S$ is either $2$ or $3.$ Assume first that $p=2.$ In this case, $S$ contains
$2$-semisimple elements of order $5$ and $9.$ However by using \cite{atlas}, no maximal subgroup of
$S$ possesses two such elements simultaneously. Now assume that $p=3.$ In this case, by using
\cite{atlas} again we can check that $p$ divides the order of every maximal subgroup of $S.$

{$(C_2)$} $(S,H)\in\{ (  \textrm{L}_3(4),\textrm{L}_3(2)),( \textrm{S}_4(3),2^4\cdot
\textrm{A}_5),( \textrm{O}_8^+(2),\textrm{A}_9)\}.$  For these cases, the characteristic $p$ of $S$
divides the order of $|H|.$

{$(C_3)$} $S\cong \textrm{L}_{2}(q),$ where $q\geq 4.$

Assume first that $S\cong \textrm{L}_2(4)\cong \textrm{L}_2(5).$ By \cite{atlas}, every maximal
subgroup of $S$ is of even order so we can assume that $p=5.$ In this case, using \cite{atlas}
again, we deduce that $H\cong \textrm{S}_3$ or $\textrm{A}_4.$ Assume next that $S\cong
\textrm{L}_2(7)\cong \textrm{L}_3(2).$ By \cite{atlas}, we can see that if $p=2,$ then $H\cong 7:3$
and if $p=7,$ then $H\cong \textrm{S}_4.$ Assume that $S\cong \textrm{L}_2(9)\cong \textrm{A}_6$ or
$\textrm{L}_2(8).$ By \cite{atlas}, the order of every maximal subgroup of $S$ is divisible by $p.$
Assume that $S\cong \textrm{L}_2(q)$ where $q\in \{11,13\}.$ By \cite{atlas}, $S$ possesses
$p$-semisimple elements of order $(q+1)/2$ and $(q-1)/2$ respectively. However no maximal subgroups
of $S$ can possess both such elements simultaneously.

Thus we can assume that $q\geq 16.$ Since $H$ is a maximal subgroup of $S$ and $p\nmid |H|,$
inspecting the list of maximal subgroups of $\textrm{L}_2(q)$ in \cite{King}, the following cases
hold.

$(i)$ $H$ is a dihedral group of order $q+1,$ with $q$ odd.

$(ii)$ $H$ is a dihedral group of order $q-1,$ with $q$ odd.

$(iii)$ $H\cong \textrm{S}_4$ and $q\equiv \pm 1$ (mod $8$), $q$ prime or $q=p^2$ and $3<p\equiv \pm 3$ (mod $8$).

$(iv)$ $H\cong \textrm{A}_4$ and $q\equiv \pm 3$ (mod $8$) with $q>3$ prime.

$(v)$ $H\cong \textrm{A}_5$ and $q\equiv \pm 1$ (mod $10$), $q$ prime or $q=p^2$ and $p\equiv \pm 3$ (mod $10$).

It follows that $q\geq 17$ is odd and thus $S$ has two $p$-semisimple elements of order $(q\pm
1)/2$ so $H$ possesses elements of such orders. As $\gcd((q-1)/2,(q+1)/2)=1,$ we deduce that
$(q^2-1)/4$ divides $|H|.$ Since $q\geq 17,$ we can see that $(q^2-1)/4>60=|A_5|$ and that
$(q^2-1)/4>q+1$ and hence $H$ cannot be one of the groups given in $(i)-(v)$ above.

{\bf Case $2.$} {$S$ is a simple exceptional group of Lie type in characteristic $p$ with $S\neq
{}^2\textrm{F}_4(2)'$.} As $\pi_{p'}(S)=\pi(H),$ $|H|$ is divisible by all the primes in the second
column of \cite[Table~10.5]{Liebeck} so it follows from the proof of \cite[Theorem~4]{Liebeck} and
\cite[Table~10.5]{Liebeck}  that $S\cong \textrm{G}_2(q)$ with $q>2$ odd and $H\cong
\textrm{L}_2(13)$ where $\{\ell_3(q),\ell_6(q)\}=\{7,13\}$ and $p\neq 13.$ By
\cite[15.1]{Aschbacher}, $S$ possesses a cyclic maximal torus of  order $q^2-1.$ Now if $q=3$ then
$q^2-1=8.$ But then $\textrm{L}_2(13)$ has no element of order $8.$ Thus  $q\geq 4$ and hence
$q^2-1\geq 15$ which is strictly larger than any element orders in $\textrm{L}_2(13).$ Hence $H$
contains no element of order $q^2-1,$ a contradiction.
\end{proof}
We now consider the case $\pi(S)=\pi(H).$ In this case we can apply \cite[Corollary~5]{Liebeck} to obtain the possibilities for the pairs $(S,H).$

\begin{lemma}\label{Liegroups2} Let $S$ be a nonabelian simple group of Lie type in characteristic $p$ and let $H$ be a maximal
subgroup of $S.$ Suppose that $p\mid |H|$ and that for any $p$-semisimple element $g\in S,$ we have
that $g^S\cap H\neq \emptyset.$ Then one of the following cases holds.

$(1)$ $S\cong \rm{S}_4(3)$ and $H\cong 2^4:\rm{A}_5.$

$(2)$ $S\cong {\rm S}_{2n}(q)$ and $H\cong \Omega_{2n}^-(q)\cdot 2\cong \rm{SO}_{2n}^-(q),$ with $q,n$ even.

$(3)$ $S\cong {\Omega}_{2n+1}(q)$ and $H\cong \Omega_{2n}^-(q)\cdot 2,$ with $n\geq 2$ even and $q$ odd.

$(4)$ $S\cong {\rm O}_{2n}^+(q)$ and $H\cong \Omega_{2n-1}(q)$ with $n\geq 4$ even.

$(5)$ $S\cong {\rm S}_{4}(q)$ and $H\cong {\rm L}_2(q^2)\cdot 2$ with $q\geq 4$ even.
\end{lemma}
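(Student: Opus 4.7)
The strategy is to apply \cite[Corollary~5]{Liebeck} to obtain a finite list of candidate pairs $(S,H)$, and then to prune this list using the $p$-semisimple meeting hypothesis.

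First, I would note that the hypothesis implies $\pi_{p'}(S)\subseteq \pi(H)$: for each prime $r\in\pi_{p'}(S)$, any element of order $r$ in $S$ is $p$-semisimple and must meet $H$ by assumption, hence $r\in\pi(H)$. Combined with $p\mid |H|$, this gives $\pi(H)=\pi(S)$. Then I would invoke \cite[Corollary~5]{Liebeck}, which classifies the maximal subgroups $H$ of finite simple groups $S$ of Lie type with $\pi(H)=\pi(S)$. This produces a short list that includes the orthogonal-in-symplectic and orthogonal-in-orthogonal families of cases $(2)$--$(4)$, the subgroup $\mathrm{L}_2(q^2)\cdot 2$ inside $\mathrm{S}_4(q)$ of case $(5)$, the small exception $(\mathrm{S}_4(3),\,2^4{:}\mathrm{A}_5)$ of case $(1)$, together with a handful of other pairs that will need to be excluded.

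For each candidate from Liebeck's list that does not appear in the conclusion, I would exhibit a $p$-semisimple element of $S$ whose order is not realized in $H$. The natural tool is a primitive prime divisor supplied by Lemma \ref{Zsigmondy}: $S$ typically contains a cyclic maximal torus of order divisible by some $\ell_n(q)$ or $\ell_{-n}(q)$, which lies in $\pi(S)=\pi(H)$ by the previous step, yet whose corresponding element cannot belong to $H$ because Sylow or maximal-torus information inside $H$ forbids an element of that order. Such a Zsigmondy witness directly contradicts the meeting hypothesis and discards the candidate; the pairs surviving this process are exactly $(1)$--$(5)$.

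The main obstacle is the meticulous case-by-case arithmetic: for each excluded candidate one must compare the element orders available in $H$ (extracted from \cite{KleidmanLiebeck} for the generic classical and exceptional families, and from \cite{atlas} for small-rank exceptions) against the full set of semisimple element orders of $S$ coming from its maximal tori listed in \cite{Carter}, and then select a concrete Zsigmondy prime exponent $n$ that separates them. Note that the lemma only asserts $(1)$--$(5)$ as the list of possibilities, so it is \emph{not} necessary to verify that each surviving pair actually satisfies the meeting hypothesis; only the elimination of the non-listed candidates is required.
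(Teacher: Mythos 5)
Your overall skeleton is the same as the paper's: deduce $\pi(H)=\pi(S)$ from $p\mid|H|$ together with the meeting hypothesis applied to elements of prime order, invoke \cite[Corollary~5]{Liebeck} to get a finite list of candidates, and then prune. Your closing remark is also correct: the lemma only asserts a list of possibilities, so the surviving pairs need not be shown to satisfy the hypothesis.

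There is, however, a genuine gap in your pruning criterion. You propose to eliminate every unwanted candidate by exhibiting a $p$-semisimple element of $S$ whose \emph{order} is not realized in $H$. This test provably fails for the candidate $S\cong\mathrm{O}_8^+(2)$ with $H\cong\mathrm{A}_9$ (which does appear on Liebeck--Praeger--Saxl's list): the semisimple (odd-order) element orders of $\mathrm{O}_8^+(2)$ are $1,3,5,7,9,15$, and every one of these is realized in $\mathrm{A}_9$. The hypothesis of the lemma is stronger than "every semisimple element order occurs in $H$" --- it says every semisimple \emph{conjugacy class} of $S$ meets $H$ --- and the paper exploits exactly this extra strength here: $\mathrm{O}_8^+(2)$ has three classes of elements of order $5$, while $\mathrm{A}_9$ (and likewise the parabolics $2^6{:}\mathrm{A}_8$) has only one, so at least two of those $S$-classes miss $H$. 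Without this fusion-counting idea your argument cannot dispose of that case. A secondary, smaller inaccuracy: Zsigmondy primes are not really the relevant tool in this lemma (they drive the companion lemma where $p\nmid|H|$); here all the candidates to be excluded are specific small groups, and the eliminations come from concrete element orders read off from the Atlas (e.g.\ order $9$ for $\mathrm{U}_4(2)$ with $p=2$, order $63$ for $\mathrm{L}_6(2)$ against $2^5{:}\mathrm{L}_5(2)$, and the orders in Table~\ref{Tab1}) plus the class-counting argument above.
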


\begin{proof}
It follows from the hypotheses that $\pi(S)=\pi(H)$ and thus by \cite[Corollary~5]{Liebeck},  one
of the following cases holds.

$(i)$ $S\cong \textrm{U}_4(2)\cong \textrm{S}_4(3)$ and $H\cong 2^4:\textrm{A}_5.$ In this case,
the characteristic of $S$ is either $2$ or $3.$ If $p=3,$ then the pair
$(S,H)=(\textrm{S}_4(3),2^4:\textrm{A}_5)$ satisfies the hypotheses of the lemma. If $p=2,$ then
$S$ has a $2$-semisimple element of order $9$ but $H$ has no such element.

$(ii)$ $S\cong \textrm{L}_6(2)$ and $H\cong \textrm{L}_5(2),P_1$ or $P_5.$ As $H$ is maximal in
$S,$ we deduce that $H$ is isomorphic to the maximal parabolic subgroup $P_1$ or $P_5.$ Hence
$H\cong 2^5:\textrm{L}_5(2).$ In this case, $S$ possesses an element of order $63$ but $H$ contains
no such element.

$(iii)$ $S\cong \textrm{O}_8^+(2)$ and $H\cong \textrm{A}_9$ or $P_i,i=1,3,4.$ Note that $P_i\cong
2^6:\textrm{A}_8$ for $i=1,3,4.$ In any cases, $H$ has only one conjugacy class of elements of
order $5$ while $S$ has $3$ conjugacy classes of elements of order $5.$ Therefore these cases
cannot happen.

$(iv)$ $S\cong \textrm{S}_{2n}(q)$ with $n\geq 2, n,q$ even and $\Omega_{2n}^-(q)\unlhd H.$ In this
case, $H\in \mathcal{C}_8(S)$ and by \cite[Proposition~4.8.6]{KleidmanLiebeck}, we have that
$H\cong \textrm{SO}_{2n}^-(q)\cong \Omega_{2n}^-(q)\cdot 2.$

$(v)$ $S\cong \textrm{O}_{2n+1}(q)$ with $n\geq 3$ even, $q$ odd and $\Omega_{2n}^-(q)\unlhd H.$ In
this case, $H\in \mathcal{C}_1(S)$ and by \cite[Proposition~4.1.6]{KleidmanLiebeck}, we have that
$H\cong \Omega_{2n}^-(q)\cdot 2.$

$(vi)$ $S\cong \textrm{O}_{2n}^+(q)$ with $n\geq 4$ even and $\Omega_{2n-1}(q)\unlhd H.$ In this
case, $H\in \mathcal{C}_1(S)$ and by \cite[Proposition~4.1.6]{KleidmanLiebeck}, we have that
$H\cong \Omega_{2n-1}(q).$

$(vii)$ $S\cong \textrm{S}_{4}(q)$  and $\textrm{L}_2(q^2)\unlhd H.$ As $\textrm{S}_4(2)$ is not
simple, we assume that $q\geq 3.$   If $q=3,$ then $\rm{S}_4(3)\cong\Omega_5(3)\cong
\textrm{U}_4(2)$ and $H\cong \rm{S}_6.$ This case will be handled in $(viii)$ so we assume that
$q>3.$ Assume first that $q>3$ is odd. Using the isomorphism $\textrm{S}_4(q)\cong \Omega_5(q),$ it
follows from \cite[Theorem~1.1]{Malle} that $H\in \mathcal{C}_1(\Omega_5(q))$ and by
\cite[Proposition~4.1.6]{KleidmanLiebeck} we have that $H\cong \Omega_4^-(q)\cdot 2\cong
\rm{L}_2(q^2)\cdot 2.$ This possibility is included  in case $(3).$ Assume now that $q\geq 4$ is
even.  Then by \cite[Theorem~1.1]{Malle} again, $H\in \mathcal{C}_3(S) \cup \mathcal{C}_8(S)$ and
hence by \cite{KleidmanLiebeck} we obtain that $H\cong \Omega_4^-(q)\cdot 2\cong \rm{L}_2(q^2)\cdot
2.$ This is case $(5)$ in the lemma.

$(viii)$ The pair $(S,H)$ appears in Table \ref{Tab1}. For these cases, the conjugacy class of
$p$-semisimple elements with order given in the column `element order' in Table \ref{Tab1} does not
intersect $H.$\end{proof}
\begin{table}
 \begin{center}
  \caption{Simple groups of Lie type}\label{Tab1}
  \begin{tabular}{llc|llc}
   \hline
   $S$&$H$& element order&$S$&$H$& element order\\\hline
   $\textrm{L}_2(9)$&$\textrm{L}_2(5)$&$4$&$\textrm{S}_4(7)$&$\textrm{A}_7$&$8$\\
   $\textrm{U}_3(3)$&$\textrm{L}_2(7)$&$8$&$\textrm{G}_2(3)$&$\textrm{L}_2(13)$&$4$\\
   $\textrm{U}_3(5)$&$\textrm{A}_7$&$8$&$\textrm{U}_4(2)$&$\textrm{S}_6$&$9$\\
   $\textrm{U}_5(2)$&$\textrm{L}_2(11)$&$9$&$\textrm{S}_6(2)$&$\textrm{S}_8$&$9$\\
   $\textrm{U}_6(2)$&$\textrm{M}_{22}$&$9$&  $\textrm{U}_4(3)$&$\textrm{L}_3(4),\textrm{A}_7$&$8$\\\hline
  \end{tabular}

 \end{center}
\end{table}

As we will see shortly, only in case $(1)$ of the previous lemma is the Steinberg character an m.i
character. Cases $(2),$ $(4)$ and $(5)$ can be ruled out easily by using Lemma \ref{consequences}
and \cite{Curtis}. For case $(3)$  we will need more work.

We refer to \cite{KleidmanLiebeck,atlas} for the basic definitions and  properties of orthogonal
groups and their associated geometries. Let $p$ be an odd prime. Let $q$ be a power of $p$ and let
$\f_q$ be a finite field of size $q.$ Let $(V,\f_q,Q)$ be a classical orthogonal geometry with
$dimV=2n+1, n\geq 2$ and $Q$ a non-degenerate quadratic form on $V.$ For $x\in V-\{0\},$ a
one-space with representative $x$ is called a {\em point} in $V$ and is denoted by $\la x\ra.$ The
vector $x\in V-\{0\}$ is said to be {\em non-singular}  provided that $Q(x)\neq 0.$ Recall that a
non-singular point $\la x\ra$ with representative $x\in V-\{0\}$ is called a {\em plus point} (or
{\em minus point}) if $sgn(x^\perp)$ is $+$ (or $-$), respectively. (cf. \cite[page xi]{atlas}.) In
this situation, we also say that the non-singular point $\la x\ra$ is of plus or minus type if $\la
x\ra$ is a plus or minus point, respectively. Note that the group $\Omega(V)$ as defined in
\cite[(2.1.14)]{KleidmanLiebeck} is isomorphic to $\Omega_{2n+1}(q)\cong \textrm{O}_{2n+1}(q).$ In
this situation, $V$ is called the natural module for $\Omega(V).$ For $\xi\in\{\pm\},$ we define
$\mathfrak{E}_{\xi}(V)$ to be the set of all non-singular points of type $\xi$ in $V.$ For $\tau\in
\f_q,$ we define $$V_\tau=\{v\in V-\{0\}\:|\:Q(v)=\tau\}.$$


\begin{lemma}\label{transitive} Assume the set up above. Then the following hold.
\begin{itemize}
\item[(i)] Two
non-singular points $\la x\ra$ and $ \la y\ra$ have the same type if and
only if $Q(x)\equiv Q(y)\:(\mbox{mod $({\f}_q^*)^2$}).$
Indeed, for any non-singular point $\la z\ra$ with type $\zeta,$ we have $$\mathfrak{E}_{\zeta}(V)=
\{\la v\ra\subseteq V\;|\; Q(v)\equiv Q(z)\;  (\mbox{mod $({\f}_q^*)^2$}) \}.$$

\item[(ii)] For $\xi\in \{\pm\},$ $\Omega(V)$ acts transitively on $\mathfrak{E}_\xi(V).$

\item[(iii)] The stabilizers in $\Omega(V)$ of minus points form a unique conjugacy class of subgroups of $\Omega(V).$
\end{itemize}
\end{lemma}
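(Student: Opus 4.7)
The plan is to prove (i) via a discriminant computation, deduce (ii) from Witt's extension theorem combined with a surjectivity check on a point stabilizer in $O(V)$, and obtain (iii) as an immediate consequence of (ii).

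For (i), I will exploit the orthogonal decomposition $V = \la x\ra \perp x^{\perp}$, which gives the multiplicativity $\operatorname{disc}(V) \equiv Q(x)\cdot \operatorname{disc}(x^{\perp}) \pmod{(\f_q^*)^2}$. Because $\operatorname{disc}(V)$ is an invariant of $V$, the class of $\operatorname{disc}(x^{\perp})$ modulo squares is completely controlled by the class of $Q(x)$ modulo squares. Since the sign of a non-degenerate even-dimensional orthogonal space over $\f_q$ is determined by its discriminant modulo squares, the type of $\la x\ra$ is determined by the class of $Q(x)$ in $\f_q^*/(\f_q^*)^2$, and conversely. This simultaneously delivers the stated equivalence and the explicit description of $\mathfrak{E}_\zeta(V)$.

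For (ii), given $\la x\ra, \la y\ra \in \mathfrak{E}_\xi(V)$, part (i) lets me choose $\alpha\in \f_q^*$ with $Q(\alpha y) = \alpha^2 Q(y) = Q(x)$, after which Witt's extension theorem produces $g\in O(V)$ with $g(x) = \alpha y$, hence $g(\la x\ra) = \la y\ra$. The elements of $O(V)$ sending $\la x\ra$ to $\la y\ra$ form a single coset of $O(V)_{\la x\ra}$; this coset meets $\Omega(V)$ as soon as the natural map $O(V)_{\la x\ra} \to O(V)/\Omega(V)$ is surjective. Since $O(V)/\Omega(V) \cong \{\pm 1\} \times \f_q^*/(\f_q^*)^2$ (via determinant and spinor norm in odd characteristic) and $O(V)_{\la x\ra} \cong \{\pm 1\} \times O(x^{\perp})$, with the $\{\pm 1\}$-factor realized by the reflection $s_x$ of determinant $-1$ and spinor norm $Q(x)$, surjectivity reduces to exhibiting reflections in $O(x^{\perp})$ of both spinor-norm classes. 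This is available because $\dim x^{\perp} = 2n \geq 4$ and $x^{\perp}$ is non-degenerate, so it contains non-singular vectors of both square and non-square norm.

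Part (iii) is then immediate: the transitivity of $\Omega(V)$ on $\mathfrak{E}_{-}(V)$ forces any two stabilizers of minus points in $\Omega(V)$ to be $\Omega(V)$-conjugate. The main obstacle is the surjectivity claim inside (ii), which requires tracking determinant and spinor norm simultaneously through the stabilizer decomposition; the remaining steps are a direct application of Witt's theorem and standard discriminant bookkeeping.
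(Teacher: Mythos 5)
Your proposal is correct, but it reaches the conclusion by a genuinely different route than the paper. For (i), the paper invokes Witt's extension lemma together with the isometry classification of one-dimensional subspaces (Proposition 2.5.4 of Kleidman--Liebeck) to transfer an isometry $\la x\ra\to\la y\ra$ to one between $x^\perp$ and $y^\perp$ and back; you instead read the type of $x^\perp$ off the discriminant via the multiplicativity $\operatorname{disc}(V)\equiv Q(x)\cdot\operatorname{disc}(x^\perp)\pmod{(\f_q^*)^2}$ and the standard fact that, in fixed even dimension over $\f_q$ with $q$ odd, the sign is determined by the discriminant class. Your version is more computational and self-contained, while the paper's leans on cited structural results. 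For (ii), after the same normalization $Q(\alpha y)=Q(x)$, the paper simply cites Lemma 2.10.5 of Kleidman--Liebeck for the transitivity of $\Omega(V)$ on the level set $V_\tau$ and is done; you instead descend from the $O(V)$-transitivity given by Witt by showing the point stabilizer surjects onto $O(V)/\Omega(V)\cong \ZZ/2\times\ZZ/2$ (determinant and spinor norm), which you achieve with reflections in non-singular vectors of $x^\perp$ of both square classes --- these exist because a non-degenerate form of dimension $2n\geq 4$ over $\f_q$ is universal. In effect you reprove the cited transitivity statement; this costs some careful spinor-norm bookkeeping but removes the dependence on that reference. Part (iii) is identical in both treatments. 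Both arguments are sound; the paper's is shorter given the references, yours is more transparent about why $\Omega(V)$, rather than merely $O(V)$, acts transitively.
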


\begin{proof} As $(iii)$ is a direct consequence of $(ii),$ we only need to prove $(i)$ and $(ii).$

For $(i),$ assume  that $Q(x)\equiv Q(y)\;(\mbox{mod $({\f}_q^*)^2$}).$ By \cite[Proposition
$2.5.4(ii)$]{KleidmanLiebeck}, we have that $\langle x\rangle$ and $\langle y\rangle$ are
isometric. By Witt's lemma \cite[Proposition~2.1.6]{KleidmanLiebeck}, this isometry extends to an
isometry $g$ of $V$ such that $\la x\ra g=\la y\ra.$ As $\la x\ra, \la y\ra$ are non-degenerate, we
obtain $x^\perp g=y^\perp.$ It follows that $x^\perp$ and $y^\perp$ are isometric, and hence
$sgn(x^\perp)=sgn(y^\perp),$ so $x$ and $y$ have the same type. Now assume that $x, y$ have the
same type. By Witt's lemma and \cite[Proposition $2.5.4(i)$]{KleidmanLiebeck}, there exists an
isometry between $x^{\perp}$ and $y^{\perp}.$ This isometry can extend to an isometry $g$ of $V$
such that $(x^{\perp})g=y^{\perp}.$ Since $(x^{\perp})^{\perp}=\langle x\rangle,$ and
$(y^{\perp})^{\perp}=\langle y\rangle,$ we deduce that $\langle x\rangle g=\langle y\rangle .$ Thus
$xg=\mu y$ for some $\mu\in {\f}_q^*.$ Therefore, $Q(x)=Q(xg)=Q(\mu y)=\mu^2Q(y).$ The other
statements are obvious. This proves $(i).$

For $(ii),$ since $Q(\mu x)=\mu^2Q(x)$ for $x\in V,$ $\mu\in \f_q^*$ and $Q(xg)=Q(x)$ for all $x\in
V,g\in\Omega(V),$ we see that $\Omega(V)$ acts on $\mathfrak{E}_\xi(V).$ Now fix a non-singular
point $\la x\ra$ of type $\xi.$ Let $\la v\ra$ be any non-singular point of the same type as that
of $\la x\ra.$ By $(i),$ we have that $Q(v)=\mu^2Q(x)$ for some $\mu\in\f_q^*.$ Let $y=\mu^{-1}v\in
\la v\ra.$ Then $Q(y)=Q(x)=:\tau$ and $\la x\ra =\la y\ra=\la v\ra.$ It follows that $x,y\in
V_{\tau}$ and hence by \cite[Lemma~2.10.5]{KleidmanLiebeck}, $\Omega(V)$ acts transitively on
$V_\tau$ so there exists $g\in\Omega(V)$ such that $xg=y.$ Therefore, we obtain that $\la x\ra
g=\la y\ra=\la v\ra,$ which means that $\Omega(V)$ is transitive on $\mathfrak{E}_\xi(V)$ as
wanted.
 \end{proof}

 \begin{remark}\label{rem}
In case $(3)$ of Lemma \ref{Liegroups2}, the maximal subgroup $H$ is exactly the stabilizer in
$S\cong\Omega_{2n+1}(q)$ of a minus point in the natural module for $S.$ By Lemma
\ref{transitive}(iii), there is only one class of such maximal subgroups in $S.$
 \end{remark}


We now consider the following set up. Let $n\geq 2$ be even and let $q$  be an odd prime power. Let
$S\cong \Omega_{2n+1}(q)$ with $S\neq \Omega_5(3)$ and let $H$ be the stabilizer of a minus point
in the natural module for $S.$ We deduce that $K:=H'\cong \Omega_{2n}^-(q)\leq S$ which is a normal
subgroup of index $2$ in $H.$ Let $\tilde{S}\cong \rm{Spin}_{2n+1}(q)$ and $\tilde{K}\cong
\rm{Spin}_{2n}^-(q).$ By the assumption on $n$ and $q,$ we deduce that the centers of both
$\tilde{S}$ and $\tilde{K}$ are cyclic of order $2.$ We know that $\tilde{K}$ possesses a cyclic
maximal torus $\tilde{T}$ of order $q^n+1$ which contains the center of $\tilde{K}$ and then by
factoring out the center $Z(\tilde{K}),$ we obtain a maximal torus $T$ of $K$ with order
$k:=(q^n+1)/2.$ (cf. \cite{Hiss}.) Let $\tilde{g}$ and $g$ be generators for $\tilde{K}$ and $K,$
respectively.

We know that the conjugacy classes of maximal tori of $\tilde{S}$ are parametrized by pairs of
partitions $(\alpha,\beta)$ of $n,$ that is, $\alpha=(\alpha_1,\alpha_2,\cdots,\alpha_r)$ and
$\beta=(\beta_1,\beta_2,\cdots,\beta_s)$ and $\sum \alpha_i+\sum \beta_j=n.$ The order of the
maximal torus parametrized by such pair $(\alpha,\beta)$  is given by
$$\prod_{\alpha_i}(q^{\alpha_i}-1)\prod_{\beta_j}(q^{\beta_j}+1).$$
The conjugacy classes of  the maximal tori of $\tilde{K}$ are also parametrized by pairs of partitions
$(\alpha,\beta)$ of $n,$ where $\beta$ has an odd number of parts, and the order of the maximal
torus parametrized by $(\alpha,\beta)$  is the same as in case $\tilde{S}.$ (cf. \cite{Malle99}).
It follows that $\tilde{T}$ is a maximal torus of $\tilde{K}$ parametrized by the pair of partition
$(\emptyset,(n)).$ Now by applying Lemma \ref{Zsigmondy} and the order formula of maximal tori of
$\tilde{K},$ we can easily see that the conjugacy class of $\tilde{K}$ containing $\tilde{T}$ is
the unique class whose order is divisible by $\ell_{2n}(q).$ Since $Z(\tilde{K})\leq \tilde{T}^x$
for all $x\in \tilde{K},$ we deduce that the conjugacy class of $T$ in $K$ is the unique conjugacy
class of maximal torus whose order is divisible by $\ell_{2n}(q).$ Since $\tilde{g}\in
\tilde{K}\leq \tilde{S}$ is semisimple, it lies in some maximal torus of $\tilde{S}.$  Using the
order formula of the maximal tori of $\tilde{S}$ given above, we deduce that $\tilde{g}$ must lie
in the Coxeter torus of $\tilde{S}$ (see \cite{Hiss}). Comparing the orders, we obtain that
$\tilde{T}$ is the Coxeter torus of $\tilde{S}$ so $T$ is also a maximal torus of $S.$ We also know
that $\tilde{T}$ has a regular element $\tilde{y}$, i.e., $C_{\tilde{S}}(\tilde{y})=\tilde{T}$
which implies that $\tilde{T}\leq C_{\tilde{S}}(\tilde{T})\leq C_{\tilde{S}}(\tilde{y})=\tilde{T}$
and hence  $C_{\tilde{S}}(\tilde{T})=\tilde{T}.$ As a consequence, we obtain that $C_S(T)=T$ which
in turn implies that $C_S(g)=\la g\ra$ and $C_H(g)=\la g\ra$ as $g\in K\leq H\leq S.$

\begin{lemma}\label{unique} Assume the set up above. Then the following hold.

\begin{itemize}

\item[(1)] If $T\leq U\leq S,$ where $U$ is maximal in $S,$  then $U$ is conjugate to $H$ in $S.$

\item[(2)] If $T^x\leq H$ for some $x\in S,$ then $T^x=T^u$ for some $u\in H.$

\item[(3)] We have $N_S(T)\leq H$ and $C_S(g)=\la g\ra=C_H(g).$

\item[(4)] We have that $g^S\cap H=g^H.$

\end{itemize}
\end{lemma}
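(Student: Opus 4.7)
All four parts will follow from one geometric observation together with, for part (1), a classification result. The observation is that $C_V(T)=\langle v\rangle$, where $V$ is the natural $(2n+1)$-dimensional module for $S$ and $\langle v\rangle$ is the minus point stabilized by $H$. To see this, decompose $V=\langle v\rangle\perp v^\perp$: the torus $T\leq K=\Omega(v^\perp)$ fixes $\langle v\rangle$ pointwise and acts on $v^\perp$ as the Coxeter torus of $\Omega_{2n}^-(q)$. Because $T$ contains an element of order $\ell_{2n}(q)$, Zsigmondy's lemma forces this Coxeter action to have no nonzero fixed vector, yielding the identity.

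Granting this, parts (3), (2) and (4) admit short uniform arguments. For the first half of (3), any $s\in N_S(T)$ permutes the $T$-fixed subspaces of $V$, hence preserves $\langle v\rangle=C_V(T)$ and lies in $H$; the second half was already established in the paragraph preceding the lemma. For (2), the hypothesis $T^x\leq H$ rewrites as $T\leq H^{x^{-1}}$, which stabilizes the minus point $\langle v\rangle x^{-1}$ set-wise. Since $|T|=(q^n+1)/2$ is odd (using that $q$ is odd and $n$ is even) while $K^{x^{-1}}\trianglelefteq H^{x^{-1}}$ has index $2$, I deduce $T\leq K^{x^{-1}}$, so $T$ fixes $\langle v\rangle x^{-1}$ pointwise; then the geometric identity gives $\langle v\rangle x^{-1}\subseteq C_V(T)=\langle v\rangle$, whence $\langle v\rangle x^{-1}=\langle v\rangle$ and $x\in N_S(H)=H$ (the latter by maximality of $H$ in the simple group $S$). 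Taking $u=x$ finishes (2). For (4), if $g^x\in H$ then $T^x=\langle g^x\rangle\leq H$; part (2) yields $u\in H$ with $T^x=T^u$, and then $u^{-1}x\in N_S(T)\leq H$ by (3), giving $x\in H$ and $g^x\in g^H$.

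Part (1) is the principal obstacle. Given $T\leq U\leq S$ with $U$ maximal, I must conclude that $U$ is $S$-conjugate to $H$. The torus $T$ is the Coxeter torus of $S=\Omega_{2n+1}(q)$ and contains a regular semisimple element whose order is divisible by the primitive prime divisor $\ell_{2n}(q)$ of $q^{2n}-1$. My plan is to invoke Malle's classification \cite{Malle} of maximal subgroups of classical simple groups containing a prescribed maximal torus---the same machinery driving Lemmas \ref{Liegroups1} and \ref{Liegroups2}---specialized to the Coxeter torus of $\Omega_{2n+1}(q)$. The only surviving possibility consistent with the constraints (containing an $\ell_{2n}(q)$-element, and via (3) normalizing a unique minus point) is the $\mathcal{C}_1$-stabilizer of a minus point, i.e., an $S$-conjugate of $H$. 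I expect the subtlest step to be eliminating the small-parameter competitors (field-extension subgroups and $\mathcal{S}$-class examples), which I plan to rule out by a combination of order and Zsigmondy comparisons together with direct inspection against the \textsc{Atlas}.
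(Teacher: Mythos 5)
Your treatment of parts (2)--(4) is correct but runs along a genuinely different track from the paper's. The paper never introduces the fixed-space identity $C_V(T)=\la v\ra$; instead it proves (2) by observing that $g^x$ is an odd-order semisimple element of $K\cong\Omega_{2n}^-(q)$ lying in a maximal torus whose order is divisible by $\ell_{2n}(q)$, and then invoking the parametrization of the maximal tori of $\tilde{K}$ by pairs of partitions to see that there is a unique $K$-class of such tori, whence $T^x$ is $K$-conjugate to $T$; part (3) is then deduced from (1) and (2) together by a conjugation argument, and (4) from (2) and (3). Your geometric identity is correctly justified (an element of order $\ell_{2n}(q)$ cannot act with a nonzero fixed vector on an $\f_q$-space of dimension less than $2n$, so $C_{v^\perp}(T)=0$), and it replaces all of this: it gives $N_S(T)\leq H$ immediately, and in (2) it yields the stronger conclusion $x\in H$, which makes (4) a one-line consequence. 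A structural bonus of your route is that (2)--(4) no longer depend on (1), whereas in the paper (3) is proved via (1).

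The one place where you offer a plan rather than a proof is part (1), and that plan coincides with what the paper actually does, so it is sound but must still be executed. For $n\geq 4$, \cite[Theorem~1.1]{Malle} forces any maximal $U$ containing $T$ to lie in $\mathcal{C}_1(S)$; then the order formulas in Propositions 4.1.6 and 4.1.20 of \cite{KleidmanLiebeck}, combined with $\ell_{2n}(q)\mid |T|\mid |U|$, leave only the stabilizer of a minus point, which forms a single $S$-class by Lemma \ref{transitive}(iii). The case $n=2$ is handled separately by direct inspection of the list of maximal subgroups of $\Omega_5(q)$ in \cite{King}. Until these eliminations are written out, (1) remains unproved in your draft, though the route is the right one.
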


\begin{proof}
Let $U$ be any maximal subgroup of $S$ containing $T.$ As $n\geq 2$ is even, we consider the case
$n=2$ and $n\geq 4$ separately. Assume first that $n\geq 4.$ By \cite[Theorem~1.1]{Malle}, we have
that $U\in \mathcal{C}_1(S).$ Since $n\geq 4$ and $q$ is odd, we deduce that $\ell_{2n}(q)$ exists
and divides $|T|$ so it divides $|U|.$ Using the descriptions of the subgroups in class
$\mathcal{C}_1(S)$ given in Propositions $4.1.6$ and $4.1.20$ in \cite{KleidmanLiebeck}, $U$ must
be the stabilizer of a minus point so $U$ is conjugate to $H$ in $S$ by Lemma
\ref{transitive}(iii). The remaining case can be argued similarly or we can check directly using
the list of maximal subgroups of $S$ given in \cite{King}. This proves $(1).$

Assume that $T^x\leq H$ for some $x\in S.$ It follows that $g^x\in H$ where $x\in S,$ $T=\la g\ra$
and the order of $g$ and $g^x$ is $k.$ Since $q$ is odd and $n$ is even, we have that $k$ is always
odd. As $K\unlhd H$ is of index $2,$ we deduce that $g^x\in K.$ As $k$ is prime to the
characteristic of $K\cong \Omega_{2n}^-(q),$ we obtain that $g^x\in K$ is semisimple and hence it
must lie in some maximal torus of $K$ whose order is divisible by $\ell_{2n}(q).$ Using the
discussion above, the conjugacy class of maximal torus of $K$ containing $T$ is the only class of
maximal torus of $K$ whose order is divisible by $\ell_{2n}(q)$ so   $T^x=\la g^x\ra$ is conjugate
to $T$ in $K$ and hence in $H.$ This proves $(2).$

We next show that $N_S(T)\leq H.$ Indeed if  $T\leq N_S(T)\nleq H,$ then $N_S(T)$ must lie in some
maximal subgroup of $S$ containing $T$ since $N_S(T)\neq S.$  By $(1)$ we have $T\unlhd N_S(T)\leq
H^x$ for some $x\in S.$ It follows that $T^{x^{-1}}\leq H$ and hence $T^{x^{-1}}=T^u$ for some
$u\in H$ by $(2).$ Thus $T^{ux}=T$ or equivalently $ux\in N_S(T)\leq H^x,$ which implies that
$ux=h^x,$ where $h\in H.$ Thus we conclude that $x=hu^{-1}\in H$ and so $N_S(T)\leq H^x=H.$ This
proves the first statement of $(3).$ The other statement has already been proved in the discussion
above.

Finally, since $g\in H,$ we obtain that $g^H\subseteq g^S\cap H.$ To prove the equality, it
suffices to show that if $g^x\in H,$ where $x\in S,$ then $x\in H.$ Suppose that $g^x\in H,$ where
$x\in S.$ Then $T^x=\la g\ra^x=\la g^x\ra\subseteq H$ and thus by $(2)$ we have that $T^x=T^u$ for
some $u\in H.$ It follows that $xu^{-1}\in N_S(T)$  and hence by $(3)$ we have $N_S(T)\leq H$ so
$xu^{-1}\in H,$ which implies that $x\in H$ as $u\in H.$ The proof is now complete.
\end{proof}


We now classify all simple groups of Lie type in which the Steinberg character is an m.i character.

\begin{lemma}\label{Steinberg}
Let $S$ be a simple group of Lie type in characteristic $p.$  If $H$ is a maximal
subgroup of $S$ such that $m\St_S=\lambda^S,$ where $\lambda\in\Irr(H)$ and $m\geq 1,$
then $$(S,H)\in\{( \rm{L}_2(5), \rm{A}_4),(\rm{L}_2(7), \rm{S}_4),(\rm{L}_3(2),7:3),(\rm{S}_4(3),2^4:\rm{A}_5)\}.$$
In the first three cases,
$\lambda\in\Irr(H)-\{1_H\}$ are chosen with $\lambda(1)=1.$ In the last case,
$\lambda\in\Irr(H)$ is chosen with $\lambda(1)=3.$ Furthermore, $m=1$ in all cases.
\end{lemma}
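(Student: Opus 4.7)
The plan is to start from the hypothesis $m\St_S = \lambda^S$ and invoke Lemma \ref{consequences}, which forces every $p$-semisimple element of $S$ to meet $H$ and fixes the arithmetic relations among $m$, $\lambda(1)$ and $|S:H|$. This splits the problem into two regimes: $\pi(H) = \pi_{p'}(S)$, handled by Lemma \ref{Liegroups1}, and $\pi(H) = \pi(S)$, handled by Lemma \ref{Liegroups2}. In each regime I would run through the finite list of candidate pairs $(S,H)$ produced by those lemmas and either exhibit a $\lambda$ realising $m\St_S = \lambda^S$ or derive a contradiction.

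For the candidates from Lemma \ref{Liegroups1} and case (1) of Lemma \ref{Liegroups2}, the groups are small enough to be analysed with the ATLAS \cite{atlas}. The four pairs listed in the statement are verified by direct computation of the induced character, which gives $m=1$ in each case. The only remaining candidate $(\mathrm{L}_2(5),\mathrm{S}_3)$ is ruled out by a short character-table check in $\mathrm{A}_5\cong\mathrm{L}_2(5)$: for each $\lambda\in\Irr(\mathrm{S}_3)$, the Frobenius multiplicity $(\lambda,\St_S|_{\mathrm{S}_3})$ fails to match the value of $m$ imposed by the degree equation $|S:H|\lambda(1) = m\St_S(1)$.

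For cases (2), (4) and (5) of Lemma \ref{Liegroups2}, I would combine Lemma \ref{consequences}(2), which gives $\lambda(1)_p = |H|_p$ and hence $m = |S:H|_{p'}\lambda(1)_{p'}$, with the bound $m^2\le |S:H|$ from Lemma \ref{lem2}(ii). This collapses to $\lambda(1)_{p'}^2 \le |S:H|_p/|S:H|_{p'}$, and a direct order computation in each of the three families shows that the right-hand side is strictly less than $1$, leaving no positive integer value for $\lambda(1)_{p'}$.

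Case (3), where $S\cong\Omega_{2n+1}(q)$, $H\cong\Omega_{2n}^-(q)\cdot 2$ with $n\ge 2$ even, $q$ odd and $S\neq\Omega_5(3)$ (the single exception $\Omega_5(3)\cong\mathrm{S}_4(3)$ being handled via \cite{atlas}), is the main obstacle and is where the Coxeter-torus machinery set up just before Lemma \ref{unique} becomes essential. The same inequality now only forces $\lambda(1)_{p'} = 1$, so $\lambda(1) = |H|_p = q^{n(n-1)}$, which is odd. With $K = \Omega_{2n}^-(q)$ and $|H:K|=2$, the parity of $\lambda(1)$ rules out the induced case of Clifford's theorem, so $\lambda_K\in\Irr(K)$ has degree $|K|_p$; by the uniqueness of the Steinberg character as the only irreducible character of a simple group of Lie type whose degree equals the $p$-part of the group order, $\lambda_K = \St_K$. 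Taking $g$ to be the Coxeter-torus generator fixed in that setup, parts (3) and (4) of Lemma \ref{unique} give $\lambda^S(g) = \lambda(g) = \St_K(g) = \pm 1$; combined with $|\St_S(g)| = 1$ (since $C_S(g) = \langle g\rangle$ is a $p'$-group), the equation $m\St_S = \lambda^S$ forces $m = 1$, contradicting $m = (q^n-1)/2\ge 4$. The hard part is precisely this case: the Coxeter-torus identification together with the uniqueness statement for $\St_K$ is what closes the argument, whereas the bound from Lemma \ref{lem2}(ii) alone is insufficient.
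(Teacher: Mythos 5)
Your proposal is correct and follows the same skeleton as the paper: Lemma \ref{consequences}(1) feeds into Lemmas \ref{Liegroups1} and \ref{Liegroups2}, the small candidates are checked by computer, and the hard case $(\Omega_{2n+1}(q),\Omega_{2n}^-(q)\cdot 2)$ is closed exactly as in the paper, via $\lambda(1)_{p'}=1$, the identification $\lambda_K=\St_K$ through $p$-defect zero and \cite{Curtis}, and then Lemma \ref{unique}(3),(4) giving $\lambda^S(g)=\pm1=\pm\St_S(g)$, hence $m=1$ against $m=(q^n-1)/2\geq 4$. The one genuine divergence is your treatment of cases $(2)$ and $(5)$ of Lemma \ref{Liegroups2}: the paper handles these by showing the unique constituent $\mu$ of $\lambda_K$ has $2$-defect zero, so $\mu=\St_K$ extends to $H$ by \cite{Bianchi}, and Gallagher's theorem then contradicts $\lambda(1)=2\mu(1)$; you instead push the arithmetic of Lemma \ref{consequences}(2),(3) to the inequality $\lambda(1)_{p'}^2\leq |S:H|_p/|S:H|_{p'}$ and observe that the right side is less than $1$ since $|S:H|=q^n(q^n-1)/2$ (resp.\ $q^2(q^2-1)/2$) with $q$ even, so $|S:H|_p=q^n/2<q^n-1=|S:H|_{p'}$. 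This is correct and in fact simpler: in those two cases Lemma \ref{consequences}(3) alone already yields the contradiction $|S:H|_p<|S:H|_{p'}$, so no representation theory of $H$ is needed, and your argument uniformly disposes of cases $(2)$, $(4)$ and $(5)$ where the paper uses two different mechanisms. The trade-off is only that your route leans entirely on the order formulas for the relevant index, which you would need to record explicitly for each family; otherwise the two proofs buy the same thing, and the Coxeter-torus analysis remains unavoidable for case $(3)$ in both.
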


\begin{proof}
Assume that $m\St=\lambda^S,$ where $\lambda\in\Irr(H),m\geq 1$ and $H$ is a maximal subgroup of
$S.$ By Lemma \ref{consequences}(1) the hypotheses of Lemmas \ref{Liegroups1} and \ref{Liegroups2}
are satisfied so one of the following cases holds.

$(i)$ $(S,H)\in\{(\textrm{L}_2(5),
\textrm{A}_4),(\textrm{L}_2(5),\textrm{S}_3),(\textrm{L}_2(7),\textrm{S}_4),(\textrm{L}_3(2),7:3),(\textrm{S}_4(3),2^4:\textrm{A}_5)\}.$
Apart from the pair $(\textrm{L}_2(5),\textrm{S}_3),$ the remaining pairs satisfy the conclusion of
the lemma. This is done by using \cite{GAP}.

$(ii)$ $(S,H)=(\textrm{S}_{2n}(q),\Omega_{2n}^-(q)\cdot 2)$ with $n\geq 2$ and $n,q$ even. Let
$K=H'\cong \Omega^-_{2n}(q).$ Then $K$ is a normal subgroup of index $2$ in $H.$ Let
$\mu\in\Irr(K)$ be an irreducible constituent of $\lambda_K.$ Since $\lambda(1)_2=|H|_2=2|K|_2,$ we
have that $\lambda_K$ is not irreducible so $\mu(1)=\lambda(1)/2$ since $|H:K|=2,$ and thus
$\mu(1)=|K|_2.$ It follows that $\mu\in\Irr(K)$ is of $2$-defect zero and then by \cite{Curtis} the
only irreducible character of $2$-defect zero  of $\Omega_{2n}^-(q)$ with even $q$  is exactly the
Steinberg character, we deduce that $\mu=\St_K,$ where $K\cong \Omega_{2n}^-(q).$ By
\cite[Theorem~2]{Bianchi}, $\mu$ extends to $\mu_0\in\Irr(H)$ and hence by Gallagher's theorem
\cite[Corollary~6.17]{Isaacs}, $\psi\mu_0$ are all the irreducible constituents of $\mu^H,$ where
$\psi\in\Irr(H/K).$ Since $H/K$ is abelian of order $2,$ we obtain that all irreducible
constituents of $\mu^H$ are of the same degree $\mu(1).$ However this is a contradiction as
$\lambda(1)=2\mu(1)$ and $\lambda$ is also an irreducible constituent of $\mu^H$ by the Frobenius
reciprocity.

$(iii)$ $S=\textrm{O}^+_{2n}(q)$ with $n\geq 4$ even, and $H\cong \Omega_{2n-1}(q)$ when $q$ is odd
or $H\cong \textrm{S}_{2n-2}(q)$ when $q$ is even. We have
\[|S:H|=\frac{q^{n-1}(q^n-1)\gcd(2,q-1)}{\gcd(4,q^n-1)}.\] It follows that
\[|S:H|_p=q^{n-1} \mbox{ and } |S:H|_{p'}=\frac{(q^n-1)\gcd(2,q-1)}{\gcd(4,q^n-1)}.\] As $q\geq 3,$
we can check that $|S:H|_{p'}>|S:H|_p,$ contradicting Lemma \ref{consequences}$(3).$

 $(iv)$ $(S,H)=(\rm{O}_{2n+1}(q),\Omega_{2n}^-(q)\cdot 2)$ with $n\geq 2$ even, $q$ odd and $(n,q)\neq
(2,3).$ Let $K=H'\cong \Omega^-_{2n}(q).$ Then $K$ is a normal subgroup of index $2$ in $H.$ Since
$p$ is odd, we deduce that $\lambda(1)_p=|H|_p=|K|_p$ as $|H:K|=2.$ Let $\mu\in\Irr(K)$ be an
irreducible constituent of $\lambda_K.$ We know that $\lambda(1)/\mu(1)$ divides $|H:K|=2$ and thus
$\lambda(1)_p=\mu(1)_p=|K|_p,$ which means that $\mu$ is an irreducible character of $K$ with
$p$-defect zero. By applying the same argument as in Case $(ii),$ we deduce that $\mu=\St_K$ and
$\mu$ extends to $\mu_0\in\Irr(H).$ By Gallagher's theorem, $\psi\mu_0$ are all the irreducible
constituents of $\mu^H,$ where $\psi\in\Irr(H/K).$ As $H/K$ is abelian and $\lambda$ is an
irreducible constituent of $\mu^H,$ we obtain that $\lambda=\mu_0\tau$ for some $\tau\in
\Irr(H/K).$ It follows that $\lambda(1)=\mu_0(1)\tau(1)=\mu(1)$ so $\lambda$ is an extension of
$\mu$ to $H.$ In particular $\lambda(1)=|H|_p.$

By the discussion before Lemma \ref{unique}, $K$ possesses a cyclic maximal torus $T$ with
generator $g$ whose order $k$ is prime to $p$ so $g$ is semisimple. By Lemma \ref{unique}(3) we
have that $C_S(g)=C_H(g)=\la g\ra$ is a $p'$-group and hence by \cite[Theorem~6.5.9]{Carter} we
have that $$\St_S(g)=\pm|C_S(g)|_p=\pm 1$$ and $$\lambda(g)=\mu(g)=\St_K(g)=\pm|C_K(g)|_p=\pm 1.$$
By Lemma \ref{unique}(4), we have that $g^S\cap H=g^H$ and thus by \cite[p.~64]{Isaacs} we have
$$\lambda^S(g)=\frac{|C_S(g)|}{|C_H(g)|}\lambda(g)=\mu(g)=\pm 1.$$ As $m\St_S=\lambda^S,$ we obtain
that $m\St_S(g)=\lambda^S(g)$ and hence $m=1.$ By Lemma \ref{consequences}(3), we have that $m\geq
|S:H|_{p'}=(q^n-1)/2.$ As $q\geq 3$ and $n\geq 2,$  it is obvious that  $m>1,$ which contradicts
our previous claim that $m=1.$\end{proof}

Comparing the previous lemma with \cite[Lemma~2.4(2)]{Qian} and \cite[Lemma~6.3]{Konig}, we can see
that the Steinberg character of a simple group of Lie type is m.i if and only if it is imprimitive;
if and only if it is QSI, i.e., it is induced from a character $\varphi$ of a subgroup $U$ such
that $U/Ker(\varphi)$ is solvable.

We are now ready to prove the main result of this section.
\begin{proposition}\label{LieType} Theorem \ref{th2} holds for simple groups of Lie type.
\end{proposition}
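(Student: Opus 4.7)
The strategy is to exhibit the Steinberg character $\St_S$ of $S$ as the desired witness for Theorem \ref{th2}. By \cite[Theorem~2]{Bianchi} the Steinberg character is always extendible to $\Aut(S)$, and Lemma \ref{Steinberg} shows that $\St_S$ fails to be an $\mi$ character in every case except the four exceptional pairs $(\rm{L}_2(5),\rm{A}_4)$, $(\rm{L}_2(7),\rm{S}_4)$, $(\rm{L}_3(2),7{:}3)$, and $(\rm{S}_4(3),2^4{:}\rm{A}_5)$. Since $\rm{L}_2(7)\cong\rm{L}_3(2)$, the problem reduces to handling the three exceptional simple groups $\rm{A}_5\cong\rm{L}_2(5)$, $\rm{L}_3(2)\cong\rm{L}_2(7)$, and $\rm{U}_4(2)\cong\rm{S}_4(3)$, for which another nonlinear irreducible character of $S$ must be produced by hand.

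For each of these three exceptional groups I would consult \cite{atlas} and select a low-degree irreducible character $\chi$ of $S$ that is $\Aut(S)$-invariant; the cleanest way to guarantee invariance is to take $\chi$ to be the unique irreducible character of some fixed degree. Extendibility of such a $\chi$ to $\Aut(S)$ can then be read off directly from the character table of $\Aut(S)$ in \cite{atlas}. The natural first candidates are the irreducible character of degree $4$ of $\rm{A}_5$ (which extends to $\rm{S}_5=\Aut(\rm{A}_5)$), the unique irreducible character of degree $6$ of $\rm{L}_3(2)$, and the unique irreducible character of degree $6$ of $\rm{U}_4(2)$.

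To verify that the chosen $\chi$ is not an $\mi$ character, I would run through every maximal subgroup $U$ of $S$ (finitely many, listed in \cite{atlas}) and every $\lambda\in\Irr(U)$, and rule out the existence of an integer $m\geq 1$ with $|S:U|\lambda(1)=m\chi(1)$ and $|S:U|\geq m^2$ using Lemma \ref{lem2}(ii). For most candidate triples $(U,\lambda,m)$ a simple arithmetic incompatibility of the degree equation (or a divisibility obstruction on $\chi(1)$) should suffice; when it does not, I would fall back on Lemma \ref{lem2}(i), exhibiting a conjugacy class of $S$ on which $\chi$ is nonzero but which is disjoint from every conjugate of $U$.

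The main obstacle is the case of $\rm{U}_4(2)\cong\rm{S}_4(3)$, which has considerably more maximal subgroups and a much richer character table than the two $\rm{L}_2$ examples, so the bookkeeping is heavier and error-prone; a GAP computation is the safest way to certify this case. Once the three exceptional groups are dispatched individually, combining these ad hoc verifications with the generic case handled by Lemma \ref{Steinberg} yields the proposition.
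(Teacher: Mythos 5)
Your proposal is correct and follows the paper's strategy almost exactly: take the Steinberg character as the generic witness, invoke Lemma \ref{Steinberg} to reduce to the exceptional pairs, and then treat $\mathrm{L}_2(5)$, $\mathrm{L}_2(7)\cong\mathrm{L}_3(2)$ and $\mathrm{S}_4(3)\cong\mathrm{U}_4(2)$ by hand; your degree-$6$ character of $\mathrm{L}_3(2)$ is precisely the paper's choice, and your conjugacy-class obstruction (an element of order $7$ and one of order $2$ would both have to meet $H$) is the paper's argument there. The one place you diverge is instructive: for the remaining two groups the paper does not pick a fresh low-degree character at all, but exploits the exceptional isomorphisms $\mathrm{L}_2(5)\cong\mathrm{L}_2(4)$ and $\mathrm{S}_4(3)\cong\mathrm{U}_4(2)$ to view the group as a Lie type group in the \emph{other} characteristic, so that the second Steinberg character (of degree $4$, resp.\ $2^6=64$) is extendible to $\Aut(S)$ and is already certified non-$\mi$ by Lemma \ref{Steinberg} --- no further case analysis needed. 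This completely removes what you identify as your main obstacle, the maximal-subgroup bookkeeping for $\mathrm{U}_4(2)$ (your degree-$6$ candidate would also work, but only after the extra verification you defer to GAP). Note that your degree-$4$ character of $\mathrm{A}_5$ is in fact $\St_{\mathrm{L}_2(4)}$, so there you are implicitly using the same trick.
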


\begin{proof}
Let $S$ be a simple group of Lie type  in characteristic $p.$ By way of contradiction, suppose that
every nonlinear irreducible character of $S$  which is extendible to $\Aut(S)$ is an $\mi$
character. As the Steinberg character of $S$ is extendible to $\Aut(S),$ it is an $\mi$ character.
By Lemma \ref{Steinberg}, one of the following cases holds.

$(1)$ $S\cong \textrm{L}_2(5)$ with $p=5.$ Since $\textrm{L}_2(5)\cong \textrm{L}_2(4),$ the
Steinberg character of $S$ with degree $|S|_2=4$ extends to $\Aut(S)$ but it is not an $\mi$
character by Lemma \ref{Steinberg}.

$(2)$ $S\cong \textrm{L}_2(7)\cong \textrm{L}_3(2)$ with $p=2$ or $p=7.$ In this case, both
irreducible characters of degrees $7$ and $8$ are $\mi$ characters. Using \cite{atlas}, the
irreducible character $\chi$ labeled by the symbol $\chi_4$ with degree $6$ of $S$ is extendible to
$\Aut(S).$ We will show that $\chi$ is not an $\mi$ character. Suppose that $\chi$ is an $\mi$
character of $S.$ Then $m\chi=\lambda^S,$ where $H$ is a maximal subgroup of $S,$
$\lambda\in\Irr(H)$ and $m\geq 1$ is an integer. Let $a$ and $b$ be elements in $S$ with order $2$
and $7$ respectively. By \cite{atlas}, we have that $\chi(a)\neq 0\neq \chi(b).$ Hence $a^S\cap
H\neq \emptyset\neq b^S\cap H.$ Thus $H$ possesses elements of orders $2$ and $7$ which implies
that $\{2,7\}\subseteq \pi(H).$ However by inspecting the list of maximal subgroups of $S$ in
\cite{atlas}, we see that no maximal subgroups of $H$ satisfies this property. This contradiction
shows that $\chi$ is not an $\mi$ character.

$(3)$ $S\cong \textrm{S}_4(3)$ with $p=3.$ In this case, we have that $S\cong \textrm{S}_4(3)\cong
\textrm{U}_4(2),$ the Steinberg character of $S$ with degree $|S|_2=2^6$ is extendible to $\Aut(S)$
but it is not an $\mi$ character by Lemma \ref{Steinberg}. The proof is now complete.
\end{proof}


\section{Alternating groups}\label{Alternatinggroups}

The main purpose of this section is to prove the following result.
\begin{proposition}\label{alternating} Theorem \ref{th2} holds for alternating groups of degree at least $7.$
\end{proposition}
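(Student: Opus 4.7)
My plan is to exhibit, for every $n \geq 7$, the irreducible character $\chi \in \Irr(A_n)$ of degree $n-1$ obtained by restricting to $A_n$ the $S_n$-character indexed by the partition $(n-1,1)$. Since $(n-1,1)$ is not self-conjugate for $n \geq 4$ (its transpose is $(2,1^{n-2})$), the character $\chi$ extends to $\Aut(A_n) = S_n$. The main tool will be the classical formula $\chi(g) = \textrm{fix}(g) - 1$ for $g \in A_n$, together with Lemma \ref{lem2}.

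Assume for contradiction that $m\chi = \lambda^{A_n}$ with $H$ a maximal subgroup of $A_n$ and $\lambda \in \Irr(H)$. Lemma \ref{lem2}(ii) gives $m\lambda(1) \leq n-1$ and $|A_n:H|\lambda(1) = m(n-1)$. The point stabilizer $H = A_{n-1}$ is excluded at once, since $|A_n:A_{n-1}| = n$ would force $n \mid m$ (as $\gcd(n,n-1)=1$), contradicting $m \leq n-1$. If $H$ is primitive (so transitive and $\neq A_{n-1}$), then because $\chi(g) = n - 4 \neq 0$ on any $3$-cycle $g$, Lemma \ref{lem2}(i) places a $3$-cycle in $H$; Jordan's theorem (applicable for $n \geq 6$) then forces $H \supseteq A_n$, a contradiction.

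If $H$ is transitive but imprimitive, then $H \leq (S_a \wr S_b) \cap A_n$ for some $n = ab$ with $a, b \geq 2$. By Bertrand's postulate, together with inspection of the cases $n \in \{7,8,9,10\}$ and Ramanujan's refinement for $n \geq 11$, I can choose a prime $p$ with $n/2 < p \leq n$ and $p \neq n-1$. The corresponding $p$-cycle $g$ is even (as $p$ is odd), with $\chi(g) = n-p-1 \neq 0$, so must have a conjugate in $H \subseteq S_a \wr S_b$. A short analysis of the induced action of $g$ on the $b$ blocks rules this out: trivial action forces $p \leq a \leq n/2$, which fails; a nontrivial action (necessarily of order $p$) shows that inside one $p$-cycle of blocks every point lies in an $\la g\ra$-orbit of length exactly $p$, contributing $a \geq 2$ disjoint $p$-cycles to $g$, contradicting that $g$ is a single $p$-cycle.

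Finally, for an intransitive maximal $H \neq A_{n-1}$, so $H = (S_k \times S_{n-k}) \cap A_n$ with $2 \leq k < n/2$, I will produce a derangement in $A_n$ whose cycle decomposition cannot be partitioned between orbits of sizes $k$ and $n-k$. For $n$ odd, an $n$-cycle serves this purpose: it lies in $A_n$, has $\chi$-value $-1$, and has no proper invariant subset. For $n$ even, the element $(1,2,\ldots,n/2)(n/2+1,\ldots,n)$ is always in $A_n$ (both cycles have parity $(-1)^{n/2-1}$, giving an even product), has $\chi$-value $-1$, and its two length-$n/2$ cycles cannot be accommodated by orbits of sizes $k$ and $n-k$: placing both in one orbit needs $n$ points, while placing one in each needs $n/2 \leq k$, contradicting $k < n/2$. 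The main technical hurdle I anticipate is the wreath-product cycle-structure analysis in the imprimitive step; the remaining cases reduce to standard facts about permutation groups.
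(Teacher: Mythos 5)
Your proof is correct, but it follows a genuinely different route from the paper's. Both arguments start from the same character $\chi=\pi_n-1$ of degree $n-1$ and both dispose of the case $H\cong \textrm{A}_{n-1}$ by the divisibility argument $n\mid m$ versus $m\leq n-1$ (the paper uses $m^2\leq n$ instead, to the same effect). From there the paper restricts $\chi$ to the two-point stabilizer $\textrm{A}_{n-2}$ and applies Mackey's lemma: after replacing $U$ by a conjugate, $(\lambda_{U\cap \textrm{A}_{n-2}})^{\textrm{A}_{n-2}}$ is a strictly positive class function, hence cannot be induced from a proper subgroup, forcing $\textrm{A}_{n-2}\leq U$ and therefore $U\in\{\textrm{A}_{n-1}, \textrm{S}_{n-2}\cap \textrm{A}_n\}$; only two cases remain and each is killed by exhibiting a class on which $\chi$ is nonzero that misses $U.$ You instead run the full intransitive/imprimitive/primitive trichotomy for maximal subgroups of $\textrm{A}_n,$ using Lemma \ref{lem2}(i) with well-chosen elements: a $3$-cycle plus Jordan's theorem to eliminate primitive $H,$ a $p$-cycle for a prime $n/2<p\leq n$ with $p\neq n-1$ (Bertrand plus the Ramanujan-type refinement $\pi(x)-\pi(x/2)\geq 2$ for $x\geq 11$, and direct inspection for $7\leq n\leq 10$) to eliminate block systems, and derangements with at most two orbits to eliminate intransitive $H.$ I checked the details: the sign computations, the non-vanishing conditions ($n-4\neq 0$ for $n\geq 7$ and $n-p-1\neq 0$ from $p\neq n-1$), the wreath-product cycle analysis, and the exclusion of $k=n/2$ (which is subsumed by the imprimitive case in any event) all hold. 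The trade-off is clear: the paper's Mackey argument is shorter and avoids any appeal to the structure theory of primitive groups, whereas your argument is more elementary on the character-theoretic side but imports Jordan's theorem and a prime-distribution input; your case analysis also yields slightly more information, namely which conjugacy classes obstruct each type of maximal subgroup.
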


\begin{proof}

Let $A_n$ act on the set
$\Omega=\{1,2,\cdots,n\}$ of size $n,$ where $n\geq 7.$ We follow
the argument in \cite[Lemma~3.1]{Konig}. Let $\chi_n\in\Irr(\textrm{A}_n)$
be an irreducible character of $\textrm{A}_n$ which is extendible to
$\Aut(S)\cong \textrm{S}_n$ with degree $n-1.$ In fact, we can choose
$\chi_n=\pi_n-1,$ where $\pi_n$ is the permutation character of the
natural action of $\textrm{A}_n$ on $\Omega.$ As $n\geq 7,$ the 2-point
stabilizer $\textrm{A}_{n-2}=\textrm{Stab}_{\textrm{A}_n}(\{1,2\})$ is doubly transitive
on $\Omega-\{1,2\}.$ As $\chi_n$ is an $\mi$ character, we have that
$m\chi_n=\lambda^{\textrm{A}_n}$ for some $\lambda\in\Irr(U)$ and $U\lneq
\textrm{A}_n.$ We have that $$(\chi_n)_{\textrm{A}_{n-2}}=\chi_{n-2}+2\cdot
1_{A_{n-2}}.$$
By Mackey's Lemma, we obtain that \[m\chi_{n-2}+2m
1_{\textrm{A}_{n-2}}=\sum_{x\in T}(\lambda^x_{U^x\cap
\textrm{A}_{n-2}})^{\textrm{A}_{n-2}},\] where $T$ is a representative set of
the double cosets of  $\textrm{A}_{n-2}$ and $U$ in $\textrm{A}_n.$  Hence $$(\lambda_{U\cap
\textrm{A}_{n-2}})^{\textrm{A}_{n-2}}=m_1\chi_{n-2}+m_21_{\textrm{A}_{n-2}},$$ where
$m_1,m_2\geq 1.$ By replacing $U$ with its conjugate, we can assume
that $m_2>m_1$ so $\textrm{A}_{n-2}\leq U$ as $(\lambda_{U\cap
\textrm{A}_{n-2}})^{\textrm{A}_{n-2}}$ takes only positive values. As $U$ is maximal
in $\textrm{A}_n$ and $\textrm{A}_{n-2}\leq U,$ we deduce that $U\cong \textrm{A}_{n-1}$
or $U\cong \textrm{S}_{n-2}.$ Assume that the latter case holds. Then the
conjugacy class of $\textrm{A}_n$ with representative $g\in \textrm{A}_n,$ where
$g=(1,2,\cdots,n)$ or $(1,2\cdots,n-3)(n-2,n-1,n)$ depending on
whether $n$ is odd or even, respectively, will intersect $U$ as
$\chi_n(g)\neq 0,$ which is impossible. Thus $U\cong \textrm{A}_{n-1}.$ As
$m\chi_n=\lambda^{\textrm{A}_n}$ and $|\textrm{A}_n:U|=n,$ by Lemma \ref{lem2}(ii),
we have that $m(n-1)=n\lambda(1)$  and $|\textrm{A}_n:U|=n\geq m^2.$
As $m(n-1)=n\lambda(1)$ and $\gcd(n,n-1)=1,$ we deduce that $n\mid m,$ hence $n\leq m,$ which is impossible as $n\geq m^2$ and $n\geq 7.$
\end{proof}

\section{Sporadic simple groups and the Tits group}\label{Sporadicgroups}

\begin{table}
 \begin{center}
  \caption{Sporadic simple groups and the Tits group} \label{spor}
  \begin{tabular}{llllcc}
   \hline
   $S$  & primes &$\chi$&$\chi(1)$& possible $H$&conjugacy class\\ \hline
   $\textrm{M}_{11}$&$11$&$\chi_9$&$45$&$\textrm{L}_2(11)$&$8A$\\
   $\textrm{M}_{12}$&$11$&$\chi_7$&$54$&$\textrm{L}_2(11),M_{11}$&$10A$\\
   $\textrm{J}_1$&$11,19$&$\chi_2$&$56$&$-$&\\
   $\textrm{M}_{22}$&$7,11$&$\chi_3$&$45$&$-$&\\
   $\textrm{J}_2$&$5,7$&$\chi_6$&$36$&$-$&\\
   $\textrm{M}_{23}$&$7,23$&$\chi_2$&$22$&$-$&\\
   $\textrm{HS}$&$7,11$&$\chi_{24}$&$3200$&$\textrm{M}_{22}$&$15A$\\
   $\textrm{J}_3$&$17,19$&$\chi_6$&$324$&$-$&$$\\
   $\textrm{M}_{24}$&$23$&$\chi_{22}$&$3520$&$\textrm{L}_2(23),\textrm{M}_{23}$&$21A$\\
   $\textrm{McL}$&$7,11$&$\chi_{12}$&$4500$&$\textrm{M}_{22}$&$12A$\\
   $\textrm{He}$&$7,17$&$\chi_{12}$&$1920$&$-$&$$\\
   $\textrm{Ru}$&$13,29$&$\chi_2$&$378$&$-$&$$\\
   $\textrm{Suz}$&$11,13$&$\chi_{43}$&$248832$&$-$&$$\\
   $\textrm{O'N}$&$19,31$&$\chi_{23}$&$175616$&$-$&$$\\
   $\textrm{Co}_3$&$7,23$&$\chi_5$&$275$&$\textrm{M}_{23}$&$24A$\\
   $\textrm{Co}_2$&$7,23$&$\chi_4$&$275$&$\textrm{M}_{23}$&$30A$\\
   $\textrm{Fi}_{22}$&$11,13$&$\chi_{54}$&$1360800$&$-$&$$\\
   $\textrm{HN}$&$11,19$&$\chi_{44}$&$2985984$&$-$&$$\\
   $\textrm{Ly}$&$37,67$&$\chi_2$&$2480$&$-$&$$\\
   $\textrm{Th}$&$19,31$&$\chi_4$&$27000$&$-$&$$\\
   $\textrm{Fi}_{23}$&$17,23$&$\chi_6$&$30888$&$-$&$$\\
   $\textrm{Co}_1$&$13,23$&$\chi_{40}$&$21049875$&$-$&$$\\
   $\textrm{J}_4$&$37,43$&$\chi_{15}$&$32307363$&$-$&$$\\
   $\textrm{Fi}_{24}'$&$23,29$&$\chi_6$&$1603525$&$-$&$$\\
   $\textrm{B}$&$31,47$&$\chi_{119}$&$2642676197359616$&$-$&$$\\
   $\textrm{M}$&$59,71$&$\chi_{16}$&$8980616927734375$&$-$&$$\\
   ${}^2\textrm{F}_4(2)'$&$5,13$&$\chi_{20}$&$1728$&$\textrm{L}_2(25)$&$10A$\\\hline
\end{tabular}
\end{center}
\end{table}
In this section, we will prove the following result.
\begin{proposition}\label{sporadic} Theorem \ref{th2} holds for sporadic simple groups and the Tits group.
\end{proposition}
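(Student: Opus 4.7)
The plan is to verify the statement case by case using Table \ref{spor}, which lists for each sporadic simple group $S$ (and the Tits group ${}^2\textrm{F}_4(2)'$) an explicit candidate character $\chi$. First, for each $S$, one reads off from \cite{atlas} that the listed character $\chi$ is nonlinear and extends to $\Aut(S)$; extendibility is immediate whenever $\Aut(S)=S$ (so that $\Out(S)=1$), and in the remaining cases (where $\Out(S)$ has order $2$) it follows because $\chi$ is fixed by the outer automorphism and $\Aut(S)/S$ is cyclic, so Gallagher's theorem together with \cite[Corollary~$11.22$]{Isaacs} provides the extension.

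The bulk of the argument is to show that each chosen $\chi$ is not an $\mi$ character. Assuming for contradiction that $m\chi=\lambda^S$ with $\lambda\in\Irr(H)$ and (after the reduction following Lemma \ref{lem2}) $H$ maximal in $S$, I apply Lemma \ref{lem2}(i): for every $g\in S$ with $\chi(g)\neq 0$, some conjugate of $g$ lies in $H$. Reading the character table of $S$ in \cite{atlas}, I verify that $\chi$ does not vanish on elements of each of the prime orders appearing in the ``primes'' column of Table \ref{spor}; these two primes $p$ are then forced to divide $|H|$. Inspecting the list of maximal subgroups of $S$ in \cite{atlas} (and using the current classification of maximal subgroups of the Monster and Baby Monster where relevant), the set of candidate maximal subgroups whose order is divisible by both listed primes is either empty or reduces to those listed in the ``possible $H$'' column.

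When the ``possible $H$'' column is ``$-$'', the contradiction is immediate. In the remaining cases ($\textrm{M}_{11}$, $\textrm{M}_{12}$, $\textrm{HS}$, $\textrm{M}_{24}$, $\textrm{McL}$, $\textrm{Co}_3$, $\textrm{Co}_2$ and ${}^2\textrm{F}_4(2)'$), I rule out each listed candidate by exhibiting an element $g\in S$ whose conjugacy class is recorded in the last column of Table \ref{spor}: from \cite{atlas} one checks that $\chi(g)\neq 0$ while the element order (e.g.\ $8A$ for $\textrm{M}_{11}$, $30A$ for $\textrm{Co}_2$, etc.) does not occur in any of the listed maximal subgroups. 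By Lemma \ref{lem2}(i) this would force $g^S\cap H\neq\emptyset$, a contradiction, so $\chi$ is not an $\mi$ character.

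The main technical obstacle is the verification for the very large groups $\textrm{B}$, $\textrm{M}$, $\textrm{Fi}_{24}'$ and $\textrm{J}_4$, where the character tables are huge and the classification of maximal subgroups is delicate. The table is set up to avoid this difficulty by choosing primes that are \emph{large} divisors of $|S|$ occurring in very few maximal subgroups: for instance $\{59,71\}$ for $\textrm{M}$, $\{31,47\}$ for $\textrm{B}$, and $\{37,43\}$ for $\textrm{J}_4$. For each of these groups, no single maximal subgroup has order divisible by both primes in the chosen pair, so the ``possible $H$'' column is empty and the contradiction is reached immediately. A minor subtlety is confirming that the selected character $\chi$ is indeed nonzero on elements of each chosen prime order; this is a direct character-table lookup, but it is the step one must be careful about for every entry of Table \ref{spor}.
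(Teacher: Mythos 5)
Your proposal is correct and follows essentially the same route as the paper: assume $m\chi=\lambda^S$ with $H$ maximal, use Lemma \ref{lem2}(i) and the nonvanishing of $\chi$ on elements of the listed prime orders to force $\pi_S\subseteq\pi(H)$, reduce to the candidates in the ``possible $H$'' column (the paper sources this from \cite[Tables~10.5, 10.6]{Liebeck} rather than directly from \cite{atlas}), and eliminate the survivors via the extra conjugacy class whose order does not occur in $H$. The only cosmetic difference is that you spell out the extendibility argument via the cyclicity of $\Out(S)$, which the paper leaves as a table lookup.
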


\begin{proof}
Let $S$ be a simple sporadic group or the Tits group. All information that we need for the proof of
Proposition \ref{sporadic} is presented in Table \ref{spor}. For each sporadic simple group or the
Tits group $S,$ let $\pi_S$ be the set of primes in the second column of Table \ref{spor}. In the
fifth column, we list all the possibilities for the maximal subgroups $H$ of $S$ such that
$\pi_S\subseteq \pi(H).$ This is taken from \cite[Table~10.6]{Liebeck} for sporadic simple groups
and from \cite[Table~10.5]{Liebeck}  for the Tits group. Let $\chi\in\Irr(S)$ be the irreducible
character of $S$ labeled by the symbol in the third column of Table \ref{spor}. The corresponding
degree of $\chi$ is listed in the next column. The character $\chi$ is chosen to satisfy the
following properties.

\begin{enumerate}
  \item[(i)] $\chi$ is extendible to $\Aut(S);$
  \item[(ii)] For each prime $p\in\pi_S$ and any element $g_p\in S$ with order $p,$ we have that $\chi(g_p)\neq 0;$
  \item[(iii)] For any element $x\in S$ lying in the conjugacy class in the last column of Table \ref{spor},
  we also have that $\chi(x)\neq 0.$
\end{enumerate}

We now show that $\chi$ is not an $\mi$ character. By way of contradiction, assume that $\chi$ is
an $\mi$ character of $S.$ Then there exist a maximal subgroup $H\leq S$ and $\lambda\in\Irr(H)$
such that $m\chi=\lambda^S$ for some nonnegative integer $m.$ For each prime $p\in\pi_S$ and
$g_p\in S,$ by $(ii)$ we have  $\chi(g_p)\neq 0,$ so  $\lambda^S(g_p)=m\chi(g_p)\neq 0$, thus
$g_p^S\cap H\neq \emptyset.$ In particular $H$ possesses an element of order $p$ and thus
$\pi_S\subseteq \pi(H).$ The possibilities for $H$ are given Table \ref{spor}. In these cases, with
the same argument, we obtain that $x^S\cap H\neq \emptyset$ as $\chi(x)\neq 0$ by $(iii)$ and thus
$H$ has an element of order equal to that of $x.$ However we can see that $H$ has no elements with
such orders by checking \cite{atlas} directly.
\end{proof}

\section{Proofs of the main results}
\begin{proofoftheorem2}
This follows from Propositions \ref{LieType}, \ref{alternating}, \ref{sporadic} and the
classification of finite simple groups.\hfill$\square$
\end{proofoftheorem2}
It would be interesting if one could classify all $\mi$ characters of simple groups.
\begin{proofoftheorem1}
Assume that $N\unlhd G$ and that $G$ is an MI-group relative to $N.$ We show that $N$ is solvable.
Let $N\unlhd G$ be a  counterexample such that $|G|+|N|$ is minimal. It follows that $N$ is
nonsolvable.

We show that $N$ is the unique minimal normal subgroup of $G.$ We first show that $N$ is a minimal
normal subgroup of $G.$ Suppose not. Let $K\leq N$ be a minimal normal subgroup of $G.$ Then
$K\lneq N.$ By Lemma \ref{lem1}(ii), we obtain that $G$ is an MI-group relative to $K.$ Since
$|G|+|K|<|G|+|N|,$ by induction hypotheses we deduce that $K$ is solvable and hence $N/K$ is a
nonsolvable normal subgroup of $G/K.$ Now by Lemma \ref{lem1}(i), we have that $G/K$ is an MI-group
relative to $N/K.$ Thus by induction hypothesis again, we deduce that $N/K$ is solvable. Combining
with the previous claim, we obtain that $N$ is solvable, which is a contradiction. We have proved
that $N$ is a minimal normal subgroup of $G.$ Let $C=C_G(N).$ In order to show that $N$ is the
unique minimal normal subgroup of $G,$ it suffices to show that $C$ is trivial. Observe that
$C\unlhd G$ and thus by Lemma \ref{lem1}(i), we have that $G/C$ is an MI-group relative to $NC/C.$
As $N$ is nonsolvable and is a minimal normal subgroup of $G,$ we have that $N$ is isomorphic to a
direct product of some nonabelian isomorphic simple groups so $N\cap C=1.$ Therefore, $NC/C\cong
N/N\cap C\cong N$ is nonsolvable. If $C$ is nontrivial, then since $G/C$ is an MI-group relative to
$NC/C$, by induction hypothesis we deduce that $NC/C\cong N$ is solvable, which is impossible. Thus
$C$ is trivial. Hence $N$ is the unique minimal normal subgroup of $G$ as required.

Let $R$ be a nonabelian simple group such that $N= R_1\times R_2\times \cdots\times R_k,$ where
$R_i\cong R$ for all $i.$ By Lemma \ref{lem4}, every $\Aut(R)$-invariant nonlinear irreducible
character of $R$ is an $\mi$ character of $R.$ Now Theorem \ref{th2} provides a contradiction. The
proof is now complete. \hfill$\square$
\end{proofoftheorem1}

\begin{proofofcorollary2}
Let $L=\la H^G\ra$ be the normal closure of $H$ in $G.$  Then $H\leq L\unlhd G.$ By definition, $L$
is the smallest normal subgroup of $G$ containing $H.$ We show that $L$ is solvable. The remaining
statement is clear.

We first assume that $L=G.$ We show that $G$ is an $\MI$-group and then the result follows from
Corollary \ref{cor1}. Let $\chi\in\Irr(G)$ be any nonlinear irreducible character of $G.$ If $1_H$
is the only irreducible constituent of $\chi_H,$ then obviously $\chi_H=\chi(1)1_H$ and hence
$H\leq Ker(\chi)\unlhd G.$ Since $\chi$ is a nonlinear irreducible character of $G,$ we deduce that
$Ker(\chi)$ is a proper normal subgroup of $G$ containing $H,$ which is a contradiction as $\la
H^G\ra=G.$ It follows that $\chi_H$ has an irreducible constituent $\lambda\in \Irr(H)$ with
$\lambda\neq 1_H.$ By the hypotheses, we know that $\lambda^G$ is a multiple of some irreducible
character of $G.$ Since $(\lambda^G,\chi)_G=(\lambda,\chi_H)\neq 0$ by the Frobenius reciprocity,
we must have that $\lambda^G=m\chi$ for some nonnegative integer $m.$ Thus $\chi$ is an $\mi$
character. Hence $G$ is an $\MI$-group as wanted.

Now assume that $L\neq G.$ We claim that $G$ is an $\MI$-group relative to $L.$ Let
$\chi\in\Irr(G|L)$ and let $\theta$ be any irreducible constituent of $\chi$ upon restriction to
$L.$ Since $L\nleq Ker(\chi),$ we can choose $\theta\neq 1_L.$ If $\theta$ is not $G$-invariant,
then by the Clifford theory, we know that $\chi=\psi^G$ for some $\psi\in\Irr(I_G(\theta)|\theta)$
and hence $\chi$ is an $\mi$ character. Therefore, we can assume that $\theta$ is $G$-invariant. It
follows that $Ker(\theta)\leq L$ is a proper normal subgroup of $L$ since $\theta\neq 1_L.$ As in
the previous case, if $1_H$ is the only irreducible constituent of $\theta_H,$ then $H\leq
Ker(\theta)\lneq L\unlhd G,$ which is a contradiction as $L$ is the smallest normal subgroup of $G$
containing $H.$ Hence we conclude that $\theta_H$ possesses an irreducible  constituent
$\lambda\in\Irr(H)$ with $\lambda\neq 1_H.$ By the transitivity of induction, we obtain that $\chi$
is an irreducible constituent of $\lambda^G,$ and so by the hypotheses we deduce that
$\lambda^G=m\chi$ for some $m.$ Hence $\chi$ is an $\mi$ character. Thus $G$ is an $\MI$-group
relative to $L.$ Now the result follows from the main theorem.\hfill$\square$
\end{proofofcorollary2}

\subsection*{Acknowledgment} The authors are grateful to the referee for careful reading of the
manuscript and for his or her corrections and suggestions.

\end{document}